\documentclass[reqno,11pt]{amsart}
\usepackage[margin=1in]{geometry}
\usepackage{amsmath,amssymb,amsthm,amsfonts,mathrsfs}
\usepackage{color}
\usepackage{cite}
\usepackage{enumitem}
\usepackage[colorlinks=true,linkcolor=blue,citecolor=blue,urlcolor=blue]{hyperref}
\allowdisplaybreaks
\newtheorem{theorem}{Theorem}[section]
\newtheorem{lemma}[theorem]{Lemma}

\newtheorem{corollary}[theorem]{Corollary}

\theoremstyle{remark}
\newtheorem {remark}{{\bf Remark}}[section]

\theoremstyle{plain} \numberwithin {equation}{section}

\newcommand{\R}{\mathbb{R}}

\newcommand{\Z}{\mathbb{Z}}
\newcommand{\p}{\partial}
\newcommand{\eps}{\varepsilon}

\DeclareMathOperator{\Rea}{Re}

\DeclareMathOperator{\spec}{\sigma}
\DeclareMathOperator{\diag}{diag}

\newcommand{\Lan}{\mathcal{L}_{a,\nu}}

\title[Kolmogorov flow with horizontal dissipation]{Sharp linear stability and the absence of enhanced dissipation for Kolmogorov flow in the 2D Navier--Stokes equations with horizontal dissipation}

\author{Wanrong Yang}
\address{School of Mathematics and Information Sciences, North Minzu University,
	Yinchuan, Ningxia 750021, PR China}
\email{yangwanrong1618@163.com}

\author{Jiahong Wu}
\address{Department of Mathematics, University of Notre Dame,
	Notre Dame, IN 46556, USA}
\email{jwu29@nd.edu}

\author{Xiaoping Zhai}
\address{School of Mathematics and Statistics, Guangdong University of Technology,
	Guangzhou 510520, PR China}
\email{pingxiaozhai@163.com (Corresponding author)}
\date{\today}
\subjclass[2020]{35Q35; 35B35; 76D05; 76E05; 35B40}
\keywords{Kolmogorov flow; anisotropic Navier--Stokes equations; horizontal
	dissipation; partial dissipation; linear stability; enhanced dissipation;
	shear flow; spectral instability}

\begin{document}

\begin{abstract}
We study the linearized dynamics of the Kolmogorov flow $U^{(0)}=(a\sin x_2,0)$ for the two-dimensional incompressible Navier--Stokes equations with horizontal dissipation $\nu\partial_1^2$. Unlike the fully dissipative case, this anisotropic system admits $U^{(0)}$ as an exact unforced steady state. On each horizontal Fourier mode $k$, the dissipation reduces to the scalar $-\nu k^2$ and commutes with the advection, so that the linearized semigroup factors exactly into $e^{-\nu k^2t}$ times the inviscid Euler group. For $|k|>1$, we prove two-sided bounds showing that the decay rate is exactly $\nu k^2$, uniformly in the shear amplitude $a$; hence no enhanced dissipation occurs. This reflects a fundamental mismatch: the shear transfers enstrophy to high vertical frequencies, which horizontal dissipation does not detect. At the critical modes $|k|=1$, the inviscid group grows like $\sqrt{2at}$, producing a transient amplification of size $(a/e\nu)^{1/2}$ before decay at the rate $\nu$. The horizontally independent modes form an infinite-dimensional undamped kernel. For $0<|k|<1$, the viscous spectrum is an exact translate of the inviscid one, and the mode is linearly unstable if and only if $a\Lambda(k)>\nu k^2$. In particular, when $L>2\pi$, the flow is linearly unstable for all sufficiently large $a$.
\end{abstract}

\maketitle
\section{Introduction}

\subsection{The model}

We consider the two-dimensional incompressible Navier--Stokes equations
with dissipation acting only in the horizontal direction:
\begin{equation}\label{ans}
\begin{cases}
\partial_t U+U\cdot\nabla U=-\nabla P+\nu\partial_1^2U,\\
\nabla\cdot U=0,
\end{cases}
\qquad
(x_1,x_2)\in\mathbb T_L\times\mathbb T_{2\pi},
\quad t>0,
\end{equation}
where
\begin{align*}
\mathbb T_L=\mathbb R/L\mathbb Z,
\qquad
\mathbb T_{2\pi}=\mathbb R/2\pi\mathbb Z,
\qquad
\nu>0.
\end{align*}
More generally, \eqref{ans} is the case $b=(1,0)$ of the directionally
dissipative system
\begin{align}\label{nsb}
\partial_t U+U\cdot\nabla U
=
-\nabla P+\nu(b\cdot\nabla)^2U,
\qquad
\nabla\cdot U=0,
\end{align}
where $b\in\mathbb R^2$ is a fixed unit vector. Equations with only partial or directional dissipation arise naturally in geophysical and atmospheric modeling, where the effective eddy viscosities in different directions may differ by orders of magnitude, and they have attracted considerable mathematical attention in recent years; see, e.g., \cite{CheminDesjardinsGallagherGrenier,CaoWu,DoeringWuZhaoZheng,DongWuXuZhu} and the references therein. Partial dissipation has also been studied in the stability theory of shear flows; see, e.g., \cite{DengWuZhang,LiangWuZhai}. From the analytic viewpoint, the interest of \eqref{ans} lies in the fact that the dissipation controls only one family of frequencies: on the Fourier side, the semigroup generated by $\nu\partial_1^2$ has multiplier $e^{-\nu t\xi_1^2}$, which provides no decay and no smoothing whatsoever on the set $\{\xi_1=0\}$ and, more generally, provides decay that is uniform in the vertical frequency $\xi_2$.

The Kolmogorov flow is a fundamental model for hydrodynamic stability,
metastability, and transition to turbulence. Its classical spectral and
nonlinear stability theory was initiated by Meshalkin and Sinai
\cite{MeshalkinSinai} and Yudovich \cite{Yudovich}. {On the square torus, the forced Kolmogorov flow is globally stable for every viscosity \cite{Marchioro}, whereas on elongated tori it loses stability at long wavelengths; see \cite{MeshalkinSinai,Yudovich} and, for a recent rigorous treatment of the long-wave instability of general periodic shear flows, \cite{ColomboDolceMontaltoVentura}.} For the standard
two-dimensional Navier--Stokes equations with full Laplacian viscosity,
Ibrahim, Maekawa, and Masmoudi \cite{IbrahimMaekawaMasmoudi} established,
by pseudospectral analysis, the sharp enhanced-dissipation rate
$
e^{-c\sqrt{a\nu}\,t}
$
for the linearized dynamics after the neutral modes are removed{; see also \cite{BeckWayne,LinXu,WeiZhang,WeiZhangZhao} for related linear results and \cite{BeekieChenJia} for more general periodic shear flows}.
Li, Wei, and Zhang \cite{LiWeiZhang} extended the pseudospectral and
wave-operator methods to the three-dimensional Kolmogorov flow and,
when the background amplitude is normalized independently of $\nu$,
obtained an $H^2$ nonlinear stability threshold of order
$\nu^{7/4}$. More recently, Chen, Jia, Wei, and Zhang
\cite{ChenJiaWeiZhang} proved nonlinear asymptotic stability in two
dimensions for $H^3$ perturbations of size $o(\nu^{1/3})$, using a
combination of enhanced dissipation, inviscid damping, and vorticity
depletion. {In a complementary direction, Coti Zelati, Elgindi, and Widmayer \cite{CotiZelatiElgindiWidmayerARMA} constructed stationary structures near the Kolmogorov flow for the Euler equations and showed that nonlinear enhanced-dissipation results of the above type cannot hold near the Kolmogorov flow on the square torus.}

These results rely crucially on the fact that shear-induced mixing
transfers vorticity toward large transverse frequencies, where the full
Laplacian produces increasingly strong dissipation{; see \cite{ConstantinKiselevRyzhikZlatos,BedrossianCotiZelati} for this general mechanism}. This mechanism is absent a priori in \eqref{ans}: on each fixed horizontal Fourier mode, the operator $\nu\partial_1^2$ reduces to a scalar multiplication and therefore cannot resolve the vertical frequencies induced by the shear. This leads to the central
question of the present paper: whether mixing by the Kolmogorov flow can
still accelerate viscous decay when dissipation acts only along the flow
direction. Our purpose is to give a sharp mode-by-mode description of
the resulting linearized dynamics, including the stable, critical,
neutral, and long-wave unstable regimes, and to determine precisely the
dependence on the shear amplitude $a$.

\subsection{Constant backgrounds and Galilean reduction}
Before introducing the shear flow, it is worth recording why constant background flows are not an interesting testing ground for the question of large-amplitude behavior. Both the fully dissipative Navier--Stokes equations and the anisotropic system \eqref{nsb} admit the constant steady states $U^{(0)}=a e$, $P^{(0)}=0$, for any constant unit vector $e$ and amplitude $a>0$. The equations for the perturbation $u=U-U^{(0)}$ contain the transport term $a(e\cdot\nabla)u$. However, the Galilean change of variables
\begin{align*}
v(x,t) := u(x+aet,\,t)
\end{align*}
removes this term exactly, while leaving the nonlinearity, the incompressibility constraint, and the (constant-coefficient) dissipation invariant. Since translations act as isometries on every translation-invariant function space, every stability threshold and every decay rate for the perturbation problem is uniform in $a$: on the linear level the solution is $u(x,t)=(e^{\nu t\p_1^2}u_0)(x-aet)$, whose norms are independent of $a$, and on the nonlinear level all estimates for the Duhamel formulation involve the oscillatory factor $e^{-ia(t-s)(e\cdot\xi)}$ only through its modulus, which is one. A large constant background thus neither enhances dissipation nor destabilizes. Genuinely amplitude-dependent phenomena require a background that breaks Galilean invariance, the simplest examples being shear flows. This observation motivates the choice of background made in this paper.

\subsection{Kolmogorov flow as an exact steady state of the anisotropic system}

Let
\begin{align}\label{Kolmo}
U^{(0)}(x)
=
\bigl(a\sin x_2,0\bigr),
\qquad
P^{(0)}=0,
\qquad
a>0.
\end{align}
In the fully dissipative Navier--Stokes equations,
\begin{align*}
\Delta U^{(0)}=-a\sin x_2\,e_1,
\end{align*}
so \eqref{Kolmo} must be sustained by the forcing
\begin{align*}
f=\nu a\sin x_2\,e_1;
\end{align*}
see \cite{MeshalkinSinai,Yudovich}. By contrast, for the horizontally
dissipative system \eqref{ans},
\begin{align*}
U^{(0)}\cdot\nabla U^{(0)}
=
a\sin x_2\,\partial_1U^{(0)}
=
0,
\qquad
\partial_1^2U^{(0)}=0.
\end{align*}
Hence \eqref{Kolmo} is an exact unforced equilibrium of \eqref{ans} for
all $a>0$ and $\nu>0$.  The stability problem studied below is therefore the stability of a genuine, unforced equilibrium.

\vskip .1in
\subsection{The linearized problem}
Set
\begin{align*}
u:=U-U^{(0)},
\qquad
p:=P-P^{(0)}.
\end{align*}
Neglecting the quadratic term $u\cdot\nabla u$, we obtain
\begin{align}\label{linvel}
\partial_t u
+a\sin x_2\,\partial_1u
+a\cos x_2\,u_2e_1
=
-\nabla p+\nu\partial_1^2u,
\qquad
\nabla\cdot u=0.
\end{align}

Introduce the perturbation vorticity and stream function by
\begin{align*}
\omega:=\partial_1u_2-\partial_2u_1,
\qquad
u=\nabla^\perp\psi,
\qquad
\psi=\Delta^{-1}\omega,
\end{align*}
where $\Delta^{-1}$ is understood on the zero-mean subspace.

Taking the curl of \eqref{linvel} therefore yields
\begin{align}\label{linvort}
\partial_t\omega
+
a\sin x_2\,\partial_1
\bigl(1+\Delta^{-1}\bigr)\omega
=
\nu\partial_1^2\omega.
\end{align}
The Fourier multiplier $1+\Delta^{-1}$ is the characteristic
Kolmogorov-flow structure. Its associated quadratic form ceases to be
positive definite at sufficiently low frequencies, leading to the long-wave
instability discussed below.

Expanding in horizontal Fourier modes,
\begin{align*}
\omega(x_1,x_2,t)
=
\sum_{k\in(2\pi/L)\mathbb Z}
e^{ikx_1}\widehat\omega_k(x_2,t),
\end{align*}
we obtain, for $k\neq0$,
\begin{align}\label{modek}
\partial_t\widehat\omega_k
+ika\sin x_2
\left[
1-(k^2-\partial_2^2)^{-1}
\right]\widehat\omega_k
=
-\nu k^2\widehat\omega_k.
\end{align}
The mode $k=0$ is treated separately in
Theorem~\ref{thm:k0}.

For fixed $k\neq0$, write
\begin{align*}
\widehat{\omega}_k(x_2,t)
=
\sum_{n\in\mathbb Z}c_n(t)e^{inx_2}
\end{align*}
and define
\begin{align}\label{lambdadef}
\lambda_n=\lambda_n^{(k)}
:=
1-\frac{1}{k^2+n^2}.
\end{align}
Comparing the coefficients of $e^{inx_2}$ in \eqref{modek}, we
obtain the tridiagonal system
\begin{align}\label{tridiag}
\frac{d}{dt}c_n(t)
=
-\nu k^2c_n(t)
-\frac{ka}{2}
\left(
\lambda_{n-1}^{(k)}c_{n-1}(t)
-
\lambda_{n+1}^{(k)}c_{n+1}(t)
\right),
\qquad n\in\mathbb Z.
\end{align}
A detailed derivation of \eqref{tridiag} is given in
Section~\ref{sec:reduction}.

Writing
\begin{align*}
c(t)=(c_n(t))_{n\in\mathbb Z},
\end{align*}
define, for $k\neq0$,
\begin{align}\label{aknu}
\mathcal A_{k,\nu}
:=
-\nu k^2\mathcal{I}+a\mathcal{M}_k,
\end{align}
where
\begin{align}\label{Mk}
(\mathcal{M}_k\,c)_n
:=
-\frac{k}{2}
\left(
\lambda_{n-1}^{(k)}c_{n-1}
-
\lambda_{n+1}^{(k)}c_{n+1}
\right),
\qquad n\in\mathbb Z.
\end{align}
Then \eqref{tridiag} takes the operator form
\begin{align*}
\frac{d}{dt}c(t)
=
\mathcal A_{k,\nu} \,c(t).
\end{align*}
Moreover,
\begin{align*}
\|\mathcal{M}_k\|_{\mathcal B(\ell^2)}
\leq
|k|
\sup_{n\in\mathbb Z}
\left|\lambda_n^{(k)}\right|
=
|k|
\max\left\{
1,\frac1{k^2}-1
\right\}
<\infty.
\end{align*}
Thus $\mathcal{M}_k$ and $\mathcal A_{k,\nu}$ are bounded operators on
$\ell^2(\mathbb Z)$.

System~\eqref{tridiag} is the anisotropic counterpart of the
Meshalkin--Sinai system \cite{MeshalkinSinai}. Its decisive structural
feature is that, on each fixed horizontal Fourier mode, the
dissipative term is the scalar operator $-\nu k^2\mathcal{I},$ independent of the vertical Fourier index. Consequently, horizontal dissipation does not detect the vertical-frequency transfer generated
by the shear.

Indeed, the substitution
\begin{align}\label{substitution}
c_n(t)
=
e^{-\nu k^2t}d_n(t)
\end{align}
reduces \eqref{tridiag} to the normalized inviscid system
\begin{equation*}%
\frac{d}{dt}d_n(t)
=
-\frac{ka}{2}
\left(
\lambda_{n-1}^{(k)}d_{n-1}(t)
-
\lambda_{n+1}^{(k)}d_{n+1}(t)
\right),
\qquad n\in\mathbb Z.
\end{equation*}
Equivalently,
\begin{align}\label{factorization-intro}
e^{t\mathcal A_{k,\nu}}
=
e^{-\nu k^2t}e^{at\mathcal{M}_k},
\qquad t\geq0.
\end{align}
Thus the viscous damping separates exactly from the normalized
inviscid Kolmogorov dynamics.

\subsection{Main results}

We formulate the modewise estimates in the vorticity space
$\ell^2(\mathbb Z)$, equivalently in
$L^2(\mathbb T_{2\pi})$ by Parseval's identity. The operators
 $\mathcal A_{k,\nu}$ and $\mathcal{M}_k$ are defined in
\eqref{aknu} and \eqref{Mk}, respectively.
Throughout the paper, the absence of enhanced dissipation refers to
this fixed-horizontal-mode vorticity norm: no uniform estimate with an
exponential rate strictly larger than $\nu k^2$ can hold on the
$k$-th mode.

The behavior depends sharply on the horizontal wavenumber. The four
regimes are
\begin{align*}
|k|>1,\qquad |k|=1,\qquad k=0,\qquad 0<|k|<1.
\end{align*}

Our first result gives a sharp two-sided estimate in the coercive range.

\begin{theorem}[Sharp decay and absence of enhanced dissipation for $\lvert k\rvert>1$]
\label{thm:A}
Fix a nonzero horizontal Fourier mode $k$ with $\lvert k\rvert>1$, and let
$c(t)=(c_n(t))_{n\in\mathbb Z}$ be a solution of
\eqref{tridiag}. Define
\begin{align*}
E_k[c](t)
:=
e^{2\nu k^2t}
\sum_{n\in\mathbb Z}
\lambda_n^{(k)}\lvert c_n(t)\rvert^2.
\end{align*}
Then, for all $t\ge0$, there holds
\begin{align*}
E_k[c](t)=E_k[c](0).
\end{align*}
Moreover,
\begin{align}\label{twosided}
\left(1-\frac{1}{k^2}\right)^{1/2}
e^{-\nu k^2t}\|c(0)\|_{\ell^2}
\leq
\|c(t)\|_{\ell^2}
\leq
\left(1-\frac{1}{k^2}\right)^{-1/2}
e^{-\nu k^2t}\|c(0)\|_{\ell^2},
\qquad t\geq0.
\end{align}

The constants in \eqref{twosided} are independent of the amplitude $a$,
the dissipation coefficient $\nu$, and time $t$ (but depend on $k$).
Consequently, the $k$-th horizontal mode has the sharp exponential decay
rate $\nu k^2$. In particular, no decay rate strictly faster than
$e^{-\nu k^2t}$ can hold uniformly for all initial data, regardless of the
size of $a$; hence no enhanced dissipation occurs on this mode.
\end{theorem}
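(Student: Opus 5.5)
The plan is to use the factorization \eqref{factorization-intro}: write $c_n(t)=e^{-\nu k^2t}d_n(t)$, where $d$ solves the normalized inviscid system. Then $E_k[c](t)=\sum_n\lambda_n|d_n(t)|^2=:Q(d(t))$, so conservation of $E_k$ is the same as conservation of the quadratic form $Q(d)=\langle \Lambda d,d\rangle$, with $\Lambda=\diag(\lambda_n^{(k)})$. The key structural fact I would isolate is that $\Lambda\mathcal M_k$ is skew-adjoint on $\ell^2(\mathbb Z)$. Since all operators involved are bounded, $d\in C^1([0,\infty);\ell^2)$ and the differentiation below is rigorous; there are no domain issues.

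For the skew-adjointness, compute the matrix entries. From \eqref{Mk}, $(\mathcal M_k)_{n,n-1}=-\tfrac{k}{2}\lambda_{n-1}$ and $(\mathcal M_k)_{n,n+1}=\tfrac{k}{2}\lambda_{n+1}$. Hence
\[
(\Lambda\mathcal M_k)_{n,n-1}=-\tfrac k2\lambda_n\lambda_{n-1},\qquad (\Lambda\mathcal M_k)_{n-1,n}=\tfrac k2\lambda_{n-1}\lambda_n .
\]
The entries are real and antisymmetric, so $(\Lambda\mathcal M_k)^*=-\Lambda\mathcal M_k$. It follows that
\[
\frac{d}{dt}Q(d)=2a\,\Rea\langle \Lambda\mathcal M_k d,d\rangle=0 .
\]
Equivalently, the same conclusion can be reached by differentiating $E_k$ directly: the $\nu$ terms cancel, and the remaining sum $-ka\,\Rea\sum_n\lambda_n\lambda_{n-1}(\bar c_n c_{n-1}-\bar c_{n-1}c_n)$ vanishes, since each term is purely imaginary. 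This gives $E_k[c](t)=E_k[c](0)$. Absolute convergence of the sums is immediate because $c\in\ell^2$ and $\lambda$ is bounded.

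For the two-sided bound I use that, for $|k|>1$, $\lambda_n=1-1/(k^2+n^2)$ satisfies $1-1/k^2\le\lambda_n<1$, with the minimum at $n=0$. Hence
\[
(1-k^{-2})\|d\|^2\le Q(d)\le\|d\|^2 .
\]
Conservation then yields
\[
(1-k^{-2})\|d(t)\|^2\le Q(d(0))\le\|d(0)\|^2 ,
\]
which gives $\|d(t)\|\le(1-k^{-2})^{-1/2}\|d(0)\|$. In the other direction,
\[
\|d(t)\|^2\ge Q(d(t))=Q(d(0))\ge(1-k^{-2})\|d(0)\|^2 .
\]
Multiplying by $e^{-\nu k^2t}$ gives \eqref{twosided}. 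The constants involve only $k$, because $a$ enters only through the conserved skew-adjoint flow.

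Finally, for the sharpness and absence-of-enhanced-dissipation statement, I argue as follows. If $\|c(t)\|\le Ce^{-\gamma t}\|c(0)\|$ held for all data with some $\gamma>\nu k^2$, then the lower bound in \eqref{twosided} applied to any nonzero datum would give $(1-k^{-2})^{1/2}e^{-\nu k^2t}\le Ce^{-\gamma t}$ for all $t$, which is impossible as $t\to\infty$. The main obstacle is only checking the sign and index bookkeeping in the skew-adjointness computation. The weight $\lambda_n$ is exactly what symmetrizes the coefficients $\lambda_{n\pm1}$ appearing in \eqref{Mk}.
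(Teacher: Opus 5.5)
Your proposal is correct and follows essentially the paper's argument: factor out $e^{-\nu k^2 t}$, show that the weighted form $\sum_n\lambda_n|d_n|^2$ is conserved, and compare it with $\|d\|_{\ell^2}^2$ using $1-k^{-2}\le\lambda_n<1$. The only difference is cosmetic: you verify directly that $\diag(\lambda_n)\mathcal M_k$ is real antisymmetric, whereas the paper conjugates by $\mathcal W=\diag(\lambda_n^{-1/2})$ to get the skew-adjoint generator of Lemma~\ref{lem:skew}, and your version of the conservation step does not even need $\lambda_n>0$.
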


{Theorem~\ref{thm:A}} should be contrasted with the fully dissipative
Kolmogorov problem, where, after removing the neutral modes, the
linearized semigroup decays at the enhanced rate
$
e^{-c\sqrt{a\nu}\,t}.
$
For fixed shear amplitude $a$, this corresponds to the time scale
$O(\nu^{-1/2})$, rather than the bare diffusive scale
$O(\nu^{-1})$,
 see {\cite{BeckWayne,LinXu,WeiZhangZhao,IbrahimMaekawaMasmoudi,ChenJiaWeiZhang}} and, for the Couette flow, \cite{BedrossianMasmoudi,BedrossianVicolWang}. The lower bound in \eqref{twosided} shows that in the anisotropic problem the mixing produced by the background shear, no matter how strong, cannot accelerate the decay by more than a fixed constant factor. We emphasize that the constants in \eqref{twosided} degenerate as $|k|\downarrow1$; the marginal case is treated next.

\vskip .1in
At the critical wavenumber $\lvert k\rvert=1$, the coercive energy
degenerates and the bounded inviscid dynamics develops algebraic
transient growth. The next theorem gives the precise growth constant,
the corresponding viscous amplification, and the long-time decay rate.

\begin{theorem}[The marginal wavenumber $|k|=1$]\label{thm:k1}
Let $|k|=1$ and $a,\nu>0$.  Then the inviscid group satisfies the sharp asymptotic law
\begin{align}
\label{critical-inviscid-asymptotics}
\bigl\|e^{a t \mathcal{M}_k}\bigr\|_{\mathcal B(\ell^2)}
=
\sqrt{2at}\,(1+o(1)),
\qquad t\to\infty.
\end{align}
Moreover, there exists an absolute constant $C>0$ such that
\begin{align}
\label{critical-global-bound}
\bigl\|e^{t\mathcal A_{k,\nu}}\bigr\|_{\mathcal B(\ell^2)}
\le
C(1+\sqrt{at})e^{-\nu t},
\qquad t\ge0.
\end{align}
Consequently,
\begin{align}
\label{critical-viscous-asymptotics}
\sup_{t\ge0}
\bigl\|e^{t\mathcal A_{k,\nu}}\bigr\|_{\mathcal B(\ell^2)}
=
\left(\frac{a}{e\nu}\right)^{1/2}(1+o(1)),
\qquad \frac{a}{\nu}\to\infty.
\end{align}
The time $t=(2\nu)^{-1}$ is asymptotically maximizing, in the sense that
\begin{align}
\label{critical-maximizing-time}
\left\|e^{(2\nu)^{-1}\mathcal A_{k,\nu}}\right\|_{\mathcal B(\ell^2)}
=
\left(\frac{a}{e\nu}\right)^{1/2}(1+o(1)),
\qquad \frac{a}{\nu}\to\infty.
\end{align}
In particular,
\begin{align}\label{critical-two-sided}
\sup_{t\ge0}
\bigl\|e^{t\mathcal A_{k,\nu}}\bigr\|_{\mathcal B(\ell^2)}
\asymp
1+\sqrt{\frac{a}{\nu}},
\end{align}
with absolute implicit constants.

If $c(t)=e^{t\mathcal A_{k,\nu}}c(0)$, then the nonzero vertical modes form an
autonomous subsystem and satisfy
\begin{align}
\label{critical-conservation}
e^{2\nu t}
\sum_{n\neq0}\lambda_n|c_n(t)|^2
=
\sum_{n\neq0}\lambda_n|c_n(0)|^2,
\qquad t\ge0.
\end{align}
Thus the nonzero-mode subsystem remains uniformly bounded in the
weighted norm, while the unbounded algebraic growth of the inviscid
semigroup is generated by the forcing of the one-dimensional
$c_0$-component.

Finally, if $e_0$ denotes the standard basis vector supported at $n=0$,
then
\begin{align*}
\mathcal{M}_ke_0=0,
\qquad
e^{t\mathcal A_{k,\nu}}e_0=e^{-\nu t}e_0.
\end{align*}
Together with \eqref{critical-global-bound}, this yields the exact
asymptotic exponential rate
\begin{align*}
\lim_{t\to\infty}
\frac{1}{t}
\log
\bigl\|e^{t\mathcal A_{k,\nu}}\bigr\|_{\mathcal B(\ell^2)}
=
-\nu.
\end{align*}
\end{theorem}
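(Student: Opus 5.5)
The starting point is the structure of $\lambda_n=\lambda_n^{(k)}$ at $|k|=1$: $\lambda_0=0$ and $\lambda_n=n^2/(1+n^2)\in[\tfrac12,1)$ for $n\ne0$. In \eqref{Mk} the coefficient $c_m$ enters only through $\lambda_m c_m$. Hence the column of $\mathcal M_k$ indexed by $0$ vanishes, so $\mathcal M_ke_0=0$, and by \eqref{factorization-intro} $e^{t\mathcal A_{k,\nu}}e_0=e^{-\nu t}e_0$. For the same reason, the equations for $n\ne0$ never see $c_0$. The nonzero modes therefore form an autonomous system, which splits into the two half-chains $n\ge1$ and $n\le-1$. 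The remaining component is driven by them:
\begin{align*}
\dot c_0=-\nu c_0-\tfrac{ka}{2}\bigl(\lambda_{-1}c_{-1}-\lambda_1c_1\bigr).
\end{align*}
The conservation law \eqref{critical-conservation} follows exactly as in Theorem~\ref{thm:A}. After removing $e^{-\nu t}$ via \eqref{substitution}, compute $\frac{d}{dt}\sum_{n\ne0}\lambda_n|d_n|^2=2\Rea\sum\lambda_n\bar d_n(\mathcal M_kd)_n$; the sum telescopes to zero because $k$ is real and $\lambda_0=0$ kills the boundary terms. Since $\lambda_n\ge\frac12$ off $n=0$, the nonzero part satisfies $\|e^{at\mathcal M_k}|_{n\ne0}\|\le\sqrt2$ uniformly in $t$.

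Next I symmetrize. Setting $b_n=\sqrt{\lambda_n}d_n$ for $n\ne0$ turns each half-chain into $\dot b=aJb$, where $J=iH$ and $H$ is a self-adjoint Jacobi matrix with zero diagonal and off-diagonal entries $\frac{k}{2}\sqrt{\lambda_n\lambda_{n+1}}$ (up to sign conventions). These entries tend to $\pm\frac12$ with an $O(n^{-2})$ perturbation. The $c_0$ component is then
\begin{align*}
d_0(t)=d_0(0)+a\int_0^t\langle\varphi,e^{asJ}b(0)\rangle\,ds ,
\end{align*}
for an explicit vector $\varphi$ supported on $n=\pm1$ with weights $\frac{k}{2}\sqrt{\lambda_{\pm1}}=\pm\frac{k}{2\sqrt2}$. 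The operator norm of $e^{at\mathcal M_k}$ is therefore that of a bounded part plus the rank-one functional $b\mapsto a\langle\Phi_t,b\rangle$, where $\Phi_t=\int_0^te^{-asJ}\varphi\,ds$.

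By the spectral theorem for $H$ with spectral measure $\mu_\varphi$,
\begin{align*}
\|a\Phi_t\|^2=\int\Bigl|\frac{e^{iatx}-1}{x}\Bigr|^2d\mu_\varphi(x)=2\pi at\,\rho_\varphi(0)\,(1+o(1)),
\end{align*}
provided $\mu_\varphi$ has a continuous density $\rho_\varphi$ near $x=0$. The same identity with a bounded density gives $\|a\Phi_t\|\le C\sqrt{at}$ for all $t$. The rank-one growing part dominates the uniformly bounded rest, so $\|e^{at\mathcal M_k}\|=\|a\Phi_t\|(1+o(1))$. Thus \eqref{critical-inviscid-asymptotics} reduces to the identity $2\pi\rho_\varphi(0)=2$, and \eqref{critical-global-bound} follows after multiplying by $e^{-\nu t}$.

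The main obstacle is this spectral computation. Because the perturbation is $O(n^{-2})$, hence trace class, $H$ is absolutely continuous on $(-1,1)$ with no embedded eigenvalue at $0$ and a continuous density there. To obtain the exact value I will solve the zero-energy equation $H u=0$ on each half-line explicitly; the recursion only relates $u_{n+1}$ to $u_{n-1}$ through ratios of the rational $\lambda$'s, so it should telescope in closed form. I will then read $\rho_\varphi(0)$ off the Jost function, i.e.\ the ratio of the asymptotic amplitude of this solution to its value at $n=1$, and sum the two half-chains. If the closed form fails, the fallback is to compute the Weyl $m$-function at $0$ via a continued fraction.

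The viscous statements follow from the factorization \eqref{factorization-intro}. Maximizing $\sqrt{2at}\,e^{-\nu t}$ over $t$ gives $t=(2\nu)^{-1}$ and the value $(a/e\nu)^{1/2}$, which yields \eqref{critical-viscous-asymptotics} and \eqref{critical-maximizing-time}. On the time ranges where the $o(1)$ term is not yet small, I use \eqref{critical-global-bound} to control the supremum. The two-sided bound \eqref{critical-two-sided} combines \eqref{critical-global-bound} with the trivial lower bound $\|e^{t\mathcal A_{k,\nu}}\|\ge1$ at $t=0$. The limit $-\nu$ of $\frac1t\log\|e^{t\mathcal A_{k,\nu}}\|$ follows from the $e_0$ eigenvector (giving $\ge$) and \eqref{critical-global-bound} (giving $\le$).
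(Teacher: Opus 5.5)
Your architecture matches the paper's. The condition $\lambda_0=0$ decouples the two half-chains, the symmetrization $b_n=\sqrt{\lambda_n}\,d_n$ makes them a skew-adjoint Jacobi flow, $d_0$ is driven by a rank-one boundary functional, and the $\sqrt{\tau}$ law comes from the spectral density at $E=0$. Reading that density off the amplitude of the zero-energy Dirichlet solution $u_{2j}=0$, $u_{2j+1}=(-1)^j(\lambda_1/\lambda_{2j+1})^{1/2}u_1$ is a legitimate substitute for the paper's Weyl-function computation $m_1(0+i0)=4i$. It gives $\rho_\varphi(0)=1/\pi$, hence $\|a\Phi_t\|^2=2at+o(at)$.

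The gap is the next step, $\|e^{at\mathcal M_k}\|=\|a\Phi_t\|(1+o(1))$. The operator norm is a supremum over $\|d\|_{\ell^2}=1$, and the map $d\mapsto b=\mathcal D^{1/2}d$, with $\mathcal D=\operatorname{diag}(\lambda_n)_{n\neq0}$, is not an isometry. The functional that actually acts on the data is $d\mapsto a\langle \mathcal D^{1/2}\Phi_t,d\rangle$, whose norm is $\|\mathcal D^{1/2}a\Phi_t\|$, not $\|a\Phi_t\|$. Since $\varphi$ sits exactly at $n=\pm1$, where $\lambda_{\pm1}=\tfrac12$, your argument as written only gives $\sqrt{at}\,(1+o(1))\le\|e^{at\mathcal M_k}\|\le\sqrt{2at}\,(1+o(1))$. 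That suffices for \eqref{critical-global-bound}, \eqref{critical-two-sided} and the rate $-\nu$. It does not give the constant $\sqrt2$ in \eqref{critical-inviscid-asymptotics}, and therefore not the constant $(a/e\nu)^{1/2}$ in \eqref{critical-viscous-asymptotics} and \eqref{critical-maximizing-time}.

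The missing idea is that the mass of $\Phi_t$ escapes to large $|n|$, where $\lambda_n\to1$. Because $aJ\Phi_t=\varphi-e^{-atJ}\varphi$ is bounded and $J^*=-J$, you get $\langle\Phi_t/\sqrt t,Ju\rangle\to0$ for every $u$. Your zero-energy solution is not in $\ell^2$, so $\ker J=\{0\}$ and $\operatorname{Ran}J$ is dense; hence $\Phi_t/\sqrt t\rightharpoonup0$. Since $\mathcal I-\mathcal D=\operatorname{diag}((1+n^2)^{-1})$ is compact, $\langle(\mathcal I-\mathcal D)a\Phi_t,a\Phi_t\rangle=o(t)$, and therefore $\|\mathcal D^{1/2}a\Phi_t\|^2=2at+o(at)$. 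This is exactly the extra paragraph in the paper's Step~3.

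Two smaller points:
\begin{itemize}
\item A trace-class perturbation by itself (Kato--Rosenblum) does not rule out an embedded eigenvalue or give a continuous density at $0$. Those facts come from the $\ell^1$ Jost theory for Jacobi matrices, which your amplitude formula needs anyway.
\item For \eqref{critical-two-sided}, the bound $\|e^{t\mathcal A_{k,\nu}}\|\ge1$ only covers bounded $a/\nu$. For large $a/\nu$ you need the lower bound from \eqref{critical-viscous-asymptotics}, or the cruder $\|e^{\tau\mathcal M_k}\|\gtrsim\sqrt\tau$ evaluated at $t=(2\nu)^{-1}$.
\end{itemize}
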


\vskip .1in
The horizontally independent mode $k=0$ must be treated separately.
On this mode, the factors $\partial_1$ and $\partial_1^2$ eliminate
both the linear transport and the horizontal dissipation. The resulting
kernel corresponds precisely, under the Biot--Savart correspondence
with fixed mean velocity, to mean-zero shear perturbations.

\begin{theorem}[The undamped kernel $k=0$]\label{thm:k0}
Let
\begin{align*}
\mathcal X_0
:=
\left\{
\omega\in L^2(\mathbb T_L\times\mathbb T_{2\pi}):
\int_{\mathbb T_L\times\mathbb T_{2\pi}}
\omega(x)\,dx=0
\right\}
\end{align*}
be the physical zero-mean vorticity space. Define
\begin{align*}
\Lan \,\omega
:=
-a\sin x_2\,\partial_1(1+\Delta^{-1})\omega
+\nu\partial_1^2\omega,
\end{align*}
where $\Delta^{-1}$ is the inverse Laplacian on the zero-mean
subspace. Thus \eqref{linvort} takes the form
\begin{align*}
\partial_t\omega=\Lan \,\omega.
\end{align*}
We fix the spatial mean of the perturbation velocity and set
\begin{align*}
\mathcal K_0
:=
\left\{
\omega(x_1,x_2)=g(x_2):
g\in L^2(\mathbb T_{2\pi}),\
\int_0^{2\pi}g(x_2)\,dx_2=0
\right\}.
\end{align*}
Then
\begin{align*}
\Lan \big|_{\mathcal K_0}=0,
\qquad
e^{t\Lan }\omega=\omega
\quad
\text{for every }\omega\in\mathcal K_0\text{ and }t\ge0.
\end{align*}
Under the zero-mean Biot--Savart correspondence, $\mathcal K_0$ is
identified with the space of mean-zero shear perturbations
\begin{align*}
\mathcal S_0
:=
\left\{
u=(f(x_2),0):
f\in H^1(\mathbb T_{2\pi}),\
\int_0^{2\pi}f(x_2)\,dx_2=0
\right\},
\qquad
g=-f'.
\end{align*}
Thus $\mathcal K_0$ is an infinite-dimensional kernel of the
linearized vorticity operator, and the semigroup exhibits no decay on
this subspace. In particular, it is not strongly stable on any phase
space containing $\mathcal K_0$.

The complementary subspace
\begin{align*}
\ker \mathbb P_0
=
\Big\{
\omega:
\widehat{\omega}_0(x_2)=0
\ \text{for a.e. }x_2
\Big\},
\qquad
\mathbb{P}_0\omega
=
\frac1L\int_{\mathbb T_L}\omega(x_1,\cdot)\,dx_1,
\end{align*}
is invariant under $e^{t\Lan }$, and all decay and stability
statements below are understood on this subspace. If the horizontal
mean velocity is not prescribed, the velocity kernel is enlarged by
the constant horizontal mode $\operatorname{span}\{(1,0)\}$, or,
equivalently, by allowing arbitrary periodic $f$ in the definition of
$\mathcal S_0$.
\end{theorem}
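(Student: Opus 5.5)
The plan is a direct computation on the horizontal Fourier decomposition, organized in three steps: the kernel, the Biot--Savart identification, and the invariance of $\ker\mathbb P_0$.

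\emph{Kernel.} Take $\omega=g(x_2)\in\mathcal K_0$ with $\int_0^{2\pi} g=0$. Then $\omega$ has zero mean on $\mathbb T_L\times\mathbb T_{2\pi}$, so $\Delta^{-1}\omega$ is well defined. Since $\Delta^{-1}$ is a Fourier multiplier, it maps functions of $x_2$ alone to functions of $x_2$ alone; concretely, $\Delta^{-1}g=\p_2^{-2}g$ on mean-zero functions of $x_2$. Hence $(1+\Delta^{-1})\omega$ is independent of $x_1$. Both $\p_1(1+\Delta^{-1})\omega$ and $\p_1^2\omega$ therefore vanish, so $\Lan\omega=0$. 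Because $\Lan$ is closed (in fact bounded on each $k$-mode, and $\nu\p_1^2$ generates a semigroup), $e^{t\Lan}\omega=\omega$ follows, either from $\frac{d}{dt}\omega=0$ or from uniqueness of mild solutions. The subspace is infinite-dimensional because it contains every $e^{inx_2}$ with $n\neq0$. Consequently $\|e^{t\Lan}\omega\|=\|\omega\|$ on $\mathcal K_0$, which rules out strong stability on any space containing $\mathcal K_0$.

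\emph{Biot--Savart identification.} For $u=(f(x_2),0)$ with $f\in H^1$ and $\int f=0$, the vorticity is $\omega=\p_1u_2-\p_2u_1=-f'$, and $-f'$ automatically has zero mean. Conversely, take $g$ with zero mean. Set $\psi=\Delta^{-1}g=\p_2^{-2}g$, a function of $x_2$ alone. Then $u=\nabla^\perp\psi=(-\p_2\psi,\p_1\psi)=(-\psi',0)$, so $f=-\psi'$ has zero mean and lies in $H^1$, and $-f'=\psi''=g$. The zero-mean normalization of $f$ corresponds exactly to fixing the spatial mean of the velocity. Relaxing that normalization adds the constant vector $(1,0)$, whose vorticity is $0$.

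\emph{Invariance of $\ker\mathbb P_0$.} The key observation is that every term of $\Lan$ preserves the horizontal frequency $k$. The multiplication by $a\sin x_2$ depends only on $x_2$, while $\p_1$, $\p_1^2$ and $\Delta^{-1}$ are Fourier multipliers. Hence $\Lan$ decomposes as a direct sum of the operators on each $k$-mode given by \eqref{modek}. On the $k=0$ component it acts as the zero operator. The semigroup therefore acts diagonally in $k$, with factors $e^{t\mathcal A_{k,\nu}}$ for $k\neq0$ and the identity for $k=0$. In particular it preserves $\ker\mathbb P_0=\bigoplus_{k\neq0}$.

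The only delicate point is rigor in defining $e^{t\Lan}$ on $\mathcal X_0$, since $\Lan$ is unbounded. I would define it modewise through \eqref{factorization-intro} and check that this is the $C_0$-semigroup generated by $\Lan$. This uses that $\nu\p_1^2$ is dissipative and that the remaining part $-a\sin x_2\,\p_1(1+\Delta^{-1})$ is skew-adjoint modulo the bounded term $a\sin x_2\,\p_1\Delta^{-1}$. Given the paper's framework, this step is routine.
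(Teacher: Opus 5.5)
Your proposal is correct and follows essentially the same route as the paper. Both arguments use the invariance of the horizontal Fourier sectors, the vanishing of $\partial_1$ and $\partial_1^2$ on the $k=0$ sector (giving $\Lan g=0$ and $e^{t\Lan}g=g$), the explicit correspondence $g=-f'$ with mean-zero shears, and the commutation of $\mathbb P_0$ with the semigroup; your extra remarks on constructing $e^{t\Lan}$ as a $C_0$-semigroup and on recovering $f$ through the stream function $\psi=\Delta^{-1}g$ only make explicit details that the paper leaves implicit.
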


\vskip .1in

When $0<|k|<1$, the coercive energy argument fails because
$\lambda_0^{(k)}<0$. The scalar nature of the horizontal dissipation
nevertheless gives an exact spectral translation, reducing the viscous
problem to the inviscid spectral bound.
\begin{theorem}[Exact spectral shift and sharp instability criterion]\label{thm:B}
Fix $k\ne0$ and let $\mathcal A_{k,\nu}=-\nu k^2 \mathcal{I}+a\mathcal{M}_k$ on $\ell^2(\Z)$, with $\mathcal{M}_k$ as in \eqref{Mk}. Define the inviscid spectral bound by
\begin{align*}
\Lambda(k)
:=
s(\mathcal{M}_k)
:=
\sup\left\{
\operatorname{Re}z:
z\in\sigma(\mathcal{M}_k)
\right\}.
\end{align*}
 Then:
\begin{enumerate}
\item[$\mathrm{(i)}$] $\spec(\mathcal A_{k,\nu}) = a\,\spec(\mathcal{M}_k)-\nu k^2$, and the semigroup growth bound is exact:
\begin{align*}
\lim_{t\to\infty}\frac1t\log\big\|e^{t\mathcal A_{k,\nu}}\big\|_{\ell^2\to\ell^2}
= a\,\Lambda(k)-\nu k^2.
\end{align*}
\vskip .1in
\item[$\mathrm{(ii)}$]
For the normalized inviscid $k$-th mode, the spectral bound
$\Lambda(k)$ equals the asymptotic operator-norm growth exponent:
\begin{align*}
\lim_{\tau\to\infty}
\frac{1}{\tau}
\log
\bigl\|e^{\tau \mathcal{M}_k}\bigr\|_{\mathcal B(\ell^2)}
=
\Lambda(k).
\end{align*}
Moreover,
\begin{align*}
\Lambda(-k)=\Lambda(k),
\qquad
\Lambda(k)=0
\quad\text{for }|k|\geq1.
\end{align*}

\vskip .1in
\item[$\mathrm{(iii)}$]
The $k$-th mode is spectrally unstable if and only if
\begin{align*}
a\Lambda(k)>\nu k^2.
\end{align*}
Whenever this condition holds, the operator
$\mathcal A_{k,\nu}$ possesses an eigenvalue in the open right
half-plane and hence an exponentially growing normal mode. Its maximal
modal growth exponent, which also equals the asymptotic operator-norm
growth exponent, is
\begin{align*}
a\Lambda(k)-\nu k^2.
\end{align*}

\vskip .1in
\item[$\mathrm{(iv)}$] For $0<|k|<1$ one has $\Lambda(k)>0$, by the classical inviscid instability of the Kolmogorov flow {\cite{MeshalkinSinai,FriedlanderStraussVishik,FriedlanderHoward,BelenkayaFriedlanderYudovich,Lin}}. Consequently, if $L>2\pi$, then for each admissible wavenumber $0<|k|<1$ the flow is linearly unstable for every
\begin{align*}
a > \frac{\nu k^2}{\Lambda(k)} ,
\end{align*}
and the growth rate increases linearly in $a$.
\end{enumerate}
\end{theorem}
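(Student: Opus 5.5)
The whole theorem rests on the factorization \eqref{factorization-intro} and on the fact that $\mathcal M_k$ is a bounded operator on $\ell^2(\Z)$. Since $-\nu k^2\mathcal I$ commutes with $\mathcal M_k$, we have $\mathcal A_{k,\nu}-z=a\bigl(\mathcal M_k-\tfrac{z+\nu k^2}{a}\bigr)$. Hence $z\in\rho(\mathcal A_{k,\nu})$ if and only if $(z+\nu k^2)/a\in\rho(\mathcal M_k)$, which gives the spectral identity in (i) directly.

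For the growth bounds I will use only bounded-operator theory.
\begin{itemize}
\item By the holomorphic functional calculus, the spectral mapping theorem holds: $\spec(e^{\tau\mathcal M_k})=e^{\tau\spec(\mathcal M_k)}$. Therefore the spectral radius of $e^{\tau\mathcal M_k}$ equals $e^{\tau\Lambda(k)}$.
\item The map $\tau\mapsto\log\|e^{\tau\mathcal M_k}\|$ is subadditive. Fekete's lemma then shows that the limit in (ii) exists and equals $\lim_{m\to\infty}\tfrac1m\log\|e^{m\mathcal M_k}\|$.
\item By Gelfand's formula this limit is $\log r(e^{\mathcal M_k})=\Lambda(k)$, which proves the first claim of (ii).
\item Multiplying by the scalar factor $e^{-\nu k^2t}$ in \eqref{factorization-intro} and rescaling time by $\tau=at$ gives the growth-bound identity in (i).
\end{itemize}

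For the symmetry in (ii), note that $\lambda_n^{(k)}$ depends only on $k^2$, so $\mathcal M_{-k}=-\mathcal M_k$. Let $J$ be the unitary map $(Jc)_n=(-1)^nc_n$. Every entry of $\mathcal M_k$ couples $n$ to $n\pm1$, so $J\mathcal M_kJ=-\mathcal M_k$. Thus $\mathcal M_{-k}$ is unitarily similar to $\mathcal M_k$, and $\Lambda(-k)=\Lambda(k)$.

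To locate the spectrum I will compare $\mathcal M_k$ with the operator $\mathcal M_k^\infty$ obtained by replacing every $\lambda_n^{(k)}$ by $1$. Under Parseval, $\mathcal M_k^\infty$ is multiplication by $-ik\sin x_2$. It is therefore skew-adjoint, and its spectrum is $i[-|k|,|k|]$. Since $\lambda_n^{(k)}-1=-(k^2+n^2)^{-1}\to0$, the difference $\mathcal M_k-\mathcal M_k^\infty$ is a limit of finite-rank operators and hence compact. By Weyl's theorem and the analytic Fredholm alternative, $\spec(\mathcal M_k)$ consists of the segment $i[-|k|,|k|]$ together with isolated eigenvalues of finite multiplicity. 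These eigenvalues can accumulate only on the segment, because the complement of the segment is connected and contains points of the resolvent set. Two consequences follow:
\begin{itemize}
\item $\Lambda(k)\ge0$ for every $k\neq0$.
\item If $\Lambda(k)>0$, then only finitely many eigenvalues lie in $\{\Rea z>\Lambda(k)/2\}$, so the supremum defining $\Lambda(k)$ is attained by a genuine eigenvalue.
\end{itemize}

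For the vanishing of $\Lambda(k)$ when $|k|\ge1$:
\begin{itemize}
\item If $|k|>1$, Theorem~\ref{thm:A} with $\nu=0$ gives $\sup_\tau\|e^{\tau\mathcal M_k}\|<\infty$. So the growth exponent is $\le0$, and (ii) together with $\Lambda\ge0$ yields $\Lambda(k)=0$.
\item If $|k|=1$, the law \eqref{critical-inviscid-asymptotics} of Theorem~\ref{thm:k1} gives polynomial growth, hence again exponent $0$.
\end{itemize}

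Part (iii) then follows. By (i), the spectral bound of $\mathcal A_{k,\nu}$ is $a\Lambda(k)-\nu k^2$, and it equals the norm growth exponent. If $a\Lambda(k)>\nu k^2$, then $\Lambda(k)>0$. By the previous paragraph there is an eigenvalue $\mu$ of $\mathcal M_k$ with $\Rea\mu=\Lambda(k)$, and its eigenvector $\phi$ gives the normal mode $e^{t(a\mu-\nu k^2)}\phi$, which grows exponentially. Conversely, if $a\Lambda(k)\le\nu k^2$, no point of $\spec(\mathcal A_{k,\nu})$ lies in the open right half-plane.

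For (iv), the positivity $\Lambda(k)>0$ for $0<|k|<1$ is quoted from the classical inviscid theory. When $L>2\pi$, the wavenumber $k=2\pi/L$ is admissible and satisfies $0<k<1$. Applying (iii) gives instability precisely for $a>\nu k^2/\Lambda(k)$, with growth rate $a\Lambda(k)-\nu k^2$, which is linear in $a$.

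The step I expect to be most delicate is the essential-spectrum argument guaranteeing that $\Lambda(k)$ is attained by an eigenvalue when it is positive. This needs care with the compactness of the diagonal perturbation and with the connectedness of the complement of the segment $i[-|k|,|k|]$. All other steps are routine consequences of the factorization.
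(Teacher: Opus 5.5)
Your proposal is correct and follows essentially the paper's route: the spectral shift by translation and scaling, the spectral mapping theorem with Gelfand's formula for the growth bound, and the compact-perturbation and analytic-Fredholm description of $\sigma(\mathcal M_k)$ off $i[-|k|,|k|]$ to realize $\Lambda(k)>0$ by an eigenvalue, with Theorems~\ref{thm:A} and \ref{thm:k1} handling $|k|\ge1$. The differences are inessential: you use Fekete's lemma instead of interpolating between integer times, the sign flip $(-1)^n$ instead of the reflection $n\mapsto-n$, and you obtain $\Lambda(k)\ge0$ from the essential spectrum rather than from $\sigma(\mathcal M_k)=-\sigma(\mathcal M_k)$.
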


\vskip .1in
The preceding modewise estimates yield quantitative bounds for the
full linearized semigroup on the invariant complement of the
horizontally independent mode. Since the admissible horizontal
wavenumbers are determined by $L$, they lead to the following
domain-size classification.

\begin{corollary}[Dichotomy by domain size]\label{cor:L}
Let
$
k_{\min}:=\frac{2\pi}{L},
$
and let $\mathbb P_0$ denote the projection onto the horizontally
independent component.

\begin{enumerate}
\item[$\mathrm{(a)}$]
If $L<2\pi$, then
\begin{align*}
\left\|
e^{t\Lan }
\big|_{\ker\mathbb P_0}
\right\|_{\mathcal B(\ker\mathbb P_0)}
\leq
C_L e^{-\nu k_{\min}^2t},
\qquad t\geq0,
\end{align*}
where
\begin{align*}
C_L
=
\left(
1-\frac{1}{k_{\min}^2}
\right)^{-1/2}
=
\left(
1-\left(\frac{L}{2\pi}\right)^2
\right)^{-1/2}.
\end{align*}
Moreover,
\begin{align*}
\lim_{t\to\infty}
\frac1t
\log
\left\|
e^{t\Lan }
\big|_{\ker\mathbb P_0}
\right\|_{\mathcal B(\ker\mathbb P_0)}
=
-\nu k_{\min}^2
=
-\nu\left(\frac{2\pi}{L}\right)^2.
\end{align*}

\vskip .1in

\item[$\mathrm{(b)}$]
If $L=2\pi$, then there exists an absolute constant $C>0$ such
that
\begin{align*}
\left\|
e^{t\Lan }
\big|_{\ker\mathbb P_0}
\right\|_{\mathcal B(\ker\mathbb P_0)}
\leq
C(1+\sqrt{at})e^{-\nu t},
\qquad t\geq0.
\end{align*}
In particular,
\begin{align*}
\lim_{t\to\infty}
\frac1t
\log
\left\|
e^{t\Lan }
\big|_{\ker\mathbb P_0}
\right\|_{\mathcal B(\ker\mathbb P_0)}
=
-\nu.
\end{align*}
The algebraic factor is generated by the critical sectors
$|k|=1$; all sectors $|k|\geq2$ decay at strictly faster
exponential rates.

\vskip .1in

\item[$\mathrm{(c)}$]
If $L>2\pi$, define the finite nonempty set of admissible long-wave
wavenumbers by
\begin{align*}
\mathcal K_L
:=
\left\{
k\in\frac{2\pi}{L}\mathbb Z:
0<|k|<1
\right\}
\end{align*}
and set
\begin{align*}
a_{\mathrm c}(L,\nu)
:=
\min_{k\in\mathcal K_L}
\frac{\nu k^2}{\Lambda(k)}.
\end{align*}
Then the semigroup restricted to $\ker\mathbb P_0$ is exponentially
unstable in operator norm if and only if
\begin{align*}
a>a_{\mathrm c}(L,\nu).
\end{align*}
More precisely, whenever $a>a_{\mathrm c}(L,\nu)$,
\begin{align*}
\lim_{t\to\infty}
\frac1t
\log
\left\|
e^{t\Lan }
\big|_{\ker\mathbb P_0}
\right\|_{\mathcal B}
=
\max_{k\in\mathcal K_L}
\bigl(a\Lambda(k)-\nu k^2\bigr)
>0.
\end{align*}
In this case, the maximal growth exponent is realized by an
exponentially growing normal mode in one of the long-wave sectors.
\end{enumerate}
\end{corollary}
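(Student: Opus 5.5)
The plan is to decompose the invariant subspace $\ker\mathbb P_0$ into the orthogonal sum of horizontal Fourier sectors $k\in\frac{2\pi}{L}\mathbb Z\setminus\{0\}$. By Parseval in $x_1$ and $x_2$, the $L^2$ norm on $\ker\mathbb P_0$ is, up to the fixed factor $L$, the $\ell^2$ sum over $k$ of $\|c^{(k)}\|_{\ell^2}^2$. Since $\Lan$ commutes with $x_1$-translations, each sector is invariant and $e^{t\Lan}$ acts on it as $e^{t\mathcal A_{k,\nu}}$. Hence
\begin{align*}
\bigl\|e^{t\Lan}|_{\ker\mathbb P_0}\bigr\|=\sup_{k\neq0}\bigl\|e^{t\mathcal A_{k,\nu}}\bigr\|_{\mathcal B(\ell^2)}.
\end{align*}
This identity reduces every statement to uniform-in-$k$ modewise bounds.

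\textbf{Case (a).} Here $L<2\pi$, so every admissible $k$ satisfies $|k|\ge k_{\min}>1$. Theorem~\ref{thm:A} gives $\|e^{t\mathcal A_{k,\nu}}\|\le(1-k^{-2})^{-1/2}e^{-\nu k^2t}$. Both factors are maximized at $|k|=k_{\min}$, because $(1-k^{-2})^{-1/2}$ is decreasing in $|k|$ and $\nu k^2\ge\nu k_{\min}^2$. This yields the bound with constant $C_L$. For the exact rate, the lower bound in \eqref{twosided} at $k=k_{\min}$ gives $\|e^{t\Lan}|_{\ker\mathbb P_0}\|\ge(1-k_{\min}^{-2})^{1/2}e^{-\nu k_{\min}^2t}$. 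Taking logarithms, dividing by $t$ and letting $t\to\infty$, the upper and lower bounds squeeze the exponent to $-\nu k_{\min}^2$.

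\textbf{Case (b).} Here $L=2\pi$, so the sectors are $k\in\mathbb Z\setminus\{0\}$. For $|k|=1$ I use \eqref{critical-global-bound}. For $|k|\ge2$, Theorem~\ref{thm:A} gives the bound $(3/4)^{-1/2}e^{-4\nu t}\le C e^{-\nu t}$. Taking the supremum gives $C(1+\sqrt{at})e^{-\nu t}$. For the lower bound, the identity $e^{t\mathcal A_{1,\nu}}e_0=e^{-\nu t}e_0$ from Theorem~\ref{thm:k1} shows the norm is at least $e^{-\nu t}$. The polynomial factor disappears in the limit of $\frac1t\log$, so the exponent is exactly $-\nu$. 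The remark that sectors $|k|\ge2$ decay at rate $\nu k^2>\nu$ is read off directly from Theorem~\ref{thm:A}.

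\textbf{Case (c).} Here $L>2\pi$, and $\mathcal K_L$ is finite and nonempty because $k_{\min}<1$. I split the sectors into three groups:
\begin{itemize}
\item the long-wave sectors $k\in\mathcal K_L$;
\item the sectors with $|k|=1$, which occur only if $L/2\pi\in\mathbb Z$;
\item the sectors with $|k|>1$.
\end{itemize}
The main obstacle is uniformity in $k$ over the infinitely many sectors with $|k|>1$. The constant $(1-k^{-2})^{-1/2}$ blows up only as $|k|\downarrow1$. Since the wavenumbers are discrete, $\inf\{|k|:|k|>1\}>1$, so the constant is uniformly bounded. These sectors therefore contribute at most $C e^{-\nu t}$, and the $|k|=1$ sectors contribute at most $C(1+\sqrt{at})e^{-\nu t}$ by \eqref{critical-global-bound}.

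On the finitely many long-wave sectors I apply Theorem~\ref{thm:B}(i). It gives, for each $\varepsilon>0$,
\begin{align*}
\|e^{t\mathcal A_{k,\nu}}\|\le C_{k,\varepsilon}e^{(a\Lambda(k)-\nu k^2+\varepsilon)t}.
\end{align*}
Finitely many sectors means the maximum of these bounds is harmless. Setting $\gamma:=\max_{k\in\mathcal K_L}(a\Lambda(k)-\nu k^2)$, the overall growth exponent is at most $\max(\gamma,-\nu)$.

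Each individual sector gives a lower bound, since the full norm dominates every sector norm, and Theorem~\ref{thm:B}(i) gives the exact rate in each. Hence the growth exponent of the full semigroup equals $\max(\gamma,-\nu)$.

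This exponent is positive if and only if $\gamma>0$, that is, if and only if $a\Lambda(k)>\nu k^2$ for some $k\in\mathcal K_L$. Because $\Lambda(k)>0$ by Theorem~\ref{thm:B}(iv), this is equivalent to $a>a_{\mathrm c}(L,\nu)$, which gives the claimed dichotomy.

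Conversely, if $a\le a_{\mathrm c}$, then every sector has exponent at most $0$, so the semigroup is not exponentially unstable. When $\gamma>0$, Theorem~\ref{thm:B}(iii) supplies an eigenvalue with real part $\gamma$ in a maximizing sector. Extending it by $e^{ikx_1}$ produces the growing normal mode.
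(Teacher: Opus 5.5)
Your proposal is correct and follows essentially the same route as the paper: you reduce to $\sup_{k\neq0}\|e^{t\mathcal A_{k,\nu}}\|$ through the orthogonal sector decomposition of $\ker\mathbb P_0$, and then apply Theorem~\ref{thm:A}, Theorem~\ref{thm:k1}, Theorem~\ref{thm:B} and Corollary~\ref{cor:unstable-eigenmode} in the three regimes. In case (c) you observe that $(1-k^{-2})^{-1/2}$ stays uniformly bounded over the infinitely many sectors with $|k|>1$; this supplies the uniformity that the paper leaves implicit when it only says those sectors have nonpositive exponents (your claim that the exponent equals $\max(\gamma,-\nu)$ also tacitly uses $\gamma>-\nu$, which holds because $\Lambda(k)>0$ and $|k|<1$ on $\mathcal K_L$).
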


{The proof of Corollary~\ref{cor:L} is given in Section~\ref{sec:proofcor}.}

\medskip
\subsection{Discussion and comparison with full dissipation}

The preceding results show that mixing alone does not imply enhanced
dissipation; the decisive issue is whether diffusion acts on the
frequencies generated by the mixing. For the fully dissipative
Navier--Stokes equations, the viscous contribution to the $(k,n)$-th
Fourier mode is
\begin{align*}
-\nu(k^2+n^2)c_n.
\end{align*}
After extracting the common factor $e^{-\nu k^2t}$, the remaining
damping $-\nu n^2$ grows with the vertical frequency. Consequently,
shear-induced transfer toward large $|n|$ is met by increasingly
strong diffusion, producing the enhanced-dissipation rate
$e^{-c\sqrt{a\nu}\,t}$ for the Kolmogorov flow
{\cite{BeckWayne,LinXu,IbrahimMaekawaMasmoudi,WeiZhangZhao}; see also \cite{ConstantinKiselevRyzhikZlatos,BedrossianCotiZelati,CotiZelatiElgindiWidmayerCMP} for enhanced dissipation near other shear flows}. Together with
inviscid damping and vorticity depletion, this mechanism also plays a
central role in nonlinear stability
\cite{LiWeiZhang,ChenJiaWeiZhang}.

The situation under horizontal dissipation is fundamentally different.
On each fixed horizontal mode $k$, every vertical mode is damped at
the same rate $\nu k^2$, independently of $n$. Thus the vertical
frequency cascade generated by the shear does not strengthen the
dissipation. This obstruction is expressed by the exact factorization
\begin{align*}
e^{t\mathcal A_{k,\nu}}
=
e^{-\nu k^2t}e^{at\mathcal{M}_k}.
\end{align*}
In the stable range $|k|>1$, the inviscid group $e^{at\mathcal{M}_k}$ is
uniformly bounded after symmetrization, and Theorem~\ref{thm:A} yields
sharp two-sided bounds at the bare diffusive rate. Hence enhanced
dissipation is absent in the strongest possible sense: no uniform
improvement over $e^{-\nu k^2t}$ is possible, regardless of the shear
amplitude $a$.

For reference, the qualitative modewise classification is summarized
in the following table:

\begin{align*}
\begin{array}{|c|c|}
\hline
\text{horizontal wavenumber}
&
\text{linear behavior}
\\
\hline
|k|>1
&
\text{sharp diffusive decay and no enhanced dissipation}
\\
\hline
|k|=1
&
\text{algebraic transient growth followed by diffusive decay}
\\
\hline
k=0
&
\text{infinite-dimensional undamped shear kernel}
\\
\hline
0<|k|<1
&
\text{long-wave instability above an exact threshold}
\\
\hline
\end{array}
\end{align*}

This classification also indicates that the natural nonlinear problem
is stability modulo the manifold of shear equilibria. In particular,
one should not expect convergence to the original Kolmogorov profile
itself, but rather convergence, if it holds, to a nearby shear flow.
The nonlinear stability problem and the comparison with dissipation
acting transversely to the shear are left for future investigation.

\subsection{The gap between the linear and nonlinear problems}

The exact factorization
$$
e^{t\mathcal A_{k,\nu}}
=
e^{-\nu k^2t}e^{at\mathcal{M}_k}
$$
is a sectorwise feature of the linearized equation and does not extend
directly to the nonlinear perturbation problem. Indeed, the nonlinear
vorticity term $u\cdot\nabla\omega$ couples distinct horizontal
Fourier sectors and may transfer energy simultaneously in the
horizontal and vertical frequencies. In particular, interactions
between the modes $k$ and $-k$ can generate a horizontally
independent component, which lies in the undamped kernel described in
Theorem~\ref{thm:k0}.

A second difficulty is the absence of vertical regularization. At the
vorticity $L^2$ level, incompressibility gives the formal cancellation
$$
\int_{\mathbb T_L\times\mathbb T_{2\pi}}
(u\cdot\nabla\omega)\,\omega\,dx
=
0,
$$
but the corresponding high-order estimates contain commutators
involving vertical derivatives of both $u$ and $\omega$. The
dissipation supplies only
$$
\nu\|\partial_1\omega\|_{L^2}^2
$$
and provides no direct control of $\partial_2\omega$. This is in
sharp contrast with the fully dissipative problem, where elliptic
smoothing and enhanced dissipation provide both derivative control and
time-integrability of the non-shear component.

Consequently, the modewise $L^2$ bounds established here do not by
themselves imply nonlinear stability, nor do their constants yield a
uniform in $a$ nonlinear threshold in higher order norms. A nonlinear
theory would have to combine anisotropic commutator estimates with a
modulation argument along the manifold of shear equilibria, while
controlling the quadratic transfer into the undamped mode. The
present linear analysis identifies the decay, transient-growth, and
spectral instability mechanisms { that any such nonlinear theory must accommodate.}

\subsection{Organization of the paper}

In Section~2, we derive the horizontal and vertical Fourier reductions
and identify the operator structure behind
\eqref{factorization-intro}. In Section~3, we prove the sharp
diffusive bounds for $|k|>1$, the critical $\sqrt{t}$ growth law for
$|k|=1$, the description of the horizontally independent kernel, and
the exact spectral-shift formula in the unstable range{, and we deduce the domain-size classification of Corollary~\ref{cor:L}}. In Section~4,
we discuss the nonlinear questions suggested by the linear theory.

\subsection{Notation}
Throughout the paper,
$
k\in\frac{2\pi}{L}\mathbb Z
$
denotes the horizontal Fourier wavenumber and $n\in\mathbb Z$ the
vertical Fourier index. The coefficients $\lambda_n^{(k)}$ are defined
in \eqref{lambdadef}; the superscript $(k)$ is omitted when there is
no risk of confusion. We write $\|\cdot\|$ for either the
$L^2(\mathbb T_{2\pi})$ norm or, by Parseval's identity, the
corresponding $\ell^2(\mathbb Z)$ norm.

\medskip
\section{Fourier reduction and the structure of the advection operator}
\label{sec:reduction}

We first justify the Fourier reduction used above. Applying
$\nabla^\perp\cdot$ to \eqref{linvel} eliminates the pressure and yields
the vorticity equation \eqref{linvort}. Conversely, on each mode
$k\neq0$, the velocity is recovered from the vorticity by
\begin{align*}
\widehat u_k
=
-\nabla_k^\perp
\bigl(k^2-\partial_2^2\bigr)^{-1}\widehat\omega_k,
\end{align*}
where
\begin{align*}
\nabla_k^\perp:=(-\partial_2,ik).
\end{align*}
Since the corresponding Fourier multiplier is bounded for fixed $k\neq0$,
estimates for the vorticity transfer to the velocity, with constants depending
on $k$. The mode $k=0$ is treated separately in
Theorem~\ref{thm:k0}.

Because the coefficients in \eqref{linvort} are independent of $x_1$,
the horizontal Fourier sectors are invariant. Thus, for
\begin{align*}
\omega(x_1,x_2,t)
=
\sum_{k\in(2\pi/L)\mathbb Z}
e^{ikx_1}\widehat\omega_k(x_2,t),
\end{align*}
the $k$-th mode satisfies \eqref{modek}.

Fix $k\neq0$. Using
\begin{align*}
\sin x_2
=
\frac{e^{ix_2}-e^{-ix_2}}{2i},
\qquad
\bigl(k^2-\partial_2^2\bigr)^{-1}e^{inx_2}
=
\frac{1}{k^2+n^2}e^{inx_2},
\end{align*}
and
\begin{align*}
\widehat\omega_k(x_2,t)
=
\sum_{n\in\mathbb Z}c_n(t)e^{inx_2},
\end{align*}
we obtain
\begin{align*}
\left[
1-\bigl(k^2-\partial_2^2\bigr)^{-1}
\right]\widehat\omega_k
=
\sum_{n\in\mathbb Z}
\lambda_n c_n(t)e^{inx_2},
\qquad
\lambda_n:=1-\frac{1}{k^2+n^2}.
\end{align*}
Moreover,
\begin{align*}
\begin{aligned}
\sin x_2
\left[
1-\bigl(k^2-\partial_2^2\bigr)^{-1}
\right]\widehat\omega_k
&=
\frac{1}{2i}
\sum_{n\in\mathbb Z}
\left(
\lambda_{n-1}c_{n-1}(t)
-
\lambda_{n+1}c_{n+1}(t)
\right)e^{inx_2}.
\end{aligned}
\end{align*}
Comparing the coefficients of $e^{inx_2}$ in \eqref{modek} gives
\begin{align*}
\frac{d}{dt}c_n(t)
+
\frac{ka}{2}
\left(
\lambda_{n-1}c_{n-1}(t)
-
\lambda_{n+1}c_{n+1}(t)
\right)
=
-\nu k^2c_n(t),
\end{align*}
This is precisely \eqref{tridiag}.

To write the system in operator form, define the right-shift operator
$\mathcal{S}$ and the diagonal operator $\mathcal{D}$ on $\ell^2(\mathbb Z)$ by
\begin{align*}
(\mathcal{S}c)_n:=c_{n-1},
\qquad
(\mathcal{D}c)_n:=\lambda_n c_n.
\end{align*}
Then
\begin{align*}
(\mathcal{S}\mathcal{D}\mathcal{}c)_n=\lambda_{n-1}c_{n-1},
\qquad
(\mathcal{S}^{-1}\mathcal{D}c)_n=\lambda_{n+1}c_{n+1},
\end{align*}
where we used $\mathcal{S}\mathcal{D}=\,$``multiply by $\lambda$, then shift up'', matching \eqref{Mk}.

Hence
\begin{align}\label{operatorform}
\mathcal A_{k,\nu}
=
-\nu k^2\mathcal{I}+a\mathcal{M}_k,
\qquad
\mathcal{M}_k
=
-\frac{k}{2}\bigl(\mathcal{S}\mathcal{D}-\mathcal{S}^{-1}\mathcal{D}\bigr).
\end{align}
Since $\mathcal{S}$ and $\mathcal{S}^{-1}$ are unitary on $\ell^2(\mathbb Z)$, while
\begin{align*}
\|\mathcal{D}\|_{\ell^2\to\ell^2}
=
\sup_{n\in\mathbb Z}|\lambda_n|<\infty,
\end{align*}
the operator $\mathcal{M}_k$ is bounded on $\ell^2(\mathbb Z)$. Consequently,
\begin{align*}
e^{t\mathcal A_{k,\nu}}
=
e^{-\nu k^2t}e^{at\mathcal{M}_k},
\qquad t\geq0,
\end{align*}
where both exponentials are defined by norm-convergent power series. This
factorization is the operator form of the substitution
\eqref{substitution}. It expresses the central structural feature of the
model: on each fixed horizontal Fourier mode, the dissipative operator is the
scalar $-\nu k^2\mathcal{I}$, and hence commutes with the inviscid advection operator
$a\mathcal{M}_k$.

The following elementary lemma isolates the algebraic structure used in the stable range.

\begin{lemma}[Skew-symmetrization]\label{lem:skew}
Suppose $\lambda_n>0$ for all $n\in\Z$, and let $\mathcal{W}:=\diag(\lambda_n^{-1/2})$. Then
\begin{align*}
\mathcal{W}^{-1}\mathcal{M}_k\mathcal{W} = -\frac{k}{2}\,\big(\Sigma-\Sigma^{*}\big),
\qquad
(\Sigma c)_n := \big(\lambda_{n-1}\lambda_{n}\big)^{1/2}c_{n-1},
\end{align*}
is a bounded, skew-adjoint operator on $\ell^2(\Z)$. If in addition $\inf_n\lambda_n>0$, then $\mathcal{W}$ is bounded with bounded inverse, and consequently $\spec(\mathcal{M}_k)\subset i\R$ and $\sup_{t\in\R}\|e^{t\mathcal{M}_k}\|<\infty$.
\end{lemma}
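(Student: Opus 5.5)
The plan is a direct computation of $\mathcal W^{-1}\mathcal M_k\mathcal W$ entrywise, followed by standard consequences of similarity to a skew-adjoint operator.

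\emph{Step 1: the conjugation identity.} Since $\mathcal W$ is diagonal with entries $\lambda_n^{-1/2}$, for any finitely supported $c$ the rules $(\mathcal W c)_n=\lambda_n^{-1/2}c_n$ and $(\mathcal W^{-1}c)_n=\lambda_n^{1/2}c_n$ give, from \eqref{Mk},
\begin{align*}
(\mathcal W^{-1}\mathcal M_k\mathcal W c)_n
=-\frac{k}{2}\lambda_n^{1/2}\Bigl(\lambda_{n-1}\lambda_{n-1}^{-1/2}c_{n-1}-\lambda_{n+1}\lambda_{n+1}^{-1/2}c_{n+1}\Bigr)
=-\frac{k}{2}\Bigl((\lambda_{n-1}\lambda_n)^{1/2}c_{n-1}-(\lambda_n\lambda_{n+1})^{1/2}c_{n+1}\Bigr).
\end{align*}
The first term is $(\Sigma c)_n$. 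To identify the second, compute the adjoint of the weighted shift $\Sigma$: from
\[
\langle \Sigma c,d\rangle=\sum_n(\lambda_{n-1}\lambda_n)^{1/2}c_{n-1}\bar d_n=\sum_m c_m\,\overline{(\lambda_m\lambda_{m+1})^{1/2}d_{m+1}},
\]
one gets $(\Sigma^*d)_n=(\lambda_n\lambda_{n+1})^{1/2}d_{n+1}$, using that the weights are real. This yields the identity $\mathcal W^{-1}\mathcal M_k\mathcal W=-\frac k2(\Sigma-\Sigma^*)$ on finitely supported sequences, and in general where $\mathcal W$ is unbounded.

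\emph{Step 2: boundedness and skew-adjointness.} The weights satisfy $(\lambda_{n-1}\lambda_n)^{1/2}\le\sup_n\lambda_n\le1$, since $\lambda_n=1-1/(k^2+n^2)<1$. Hence $\Sigma$ is bounded with norm at most $1$, and the right-hand side extends to a bounded operator. Because $k$ is real, $(-\frac k2(\Sigma-\Sigma^*))^*=-\frac k2(\Sigma^*-\Sigma)$, which is the negative of the operator itself, so it is skew-adjoint. The only delicate point here is domain bookkeeping: the identity should be read on the dense set of finitely supported sequences, with the right side then extended by continuity.

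\emph{Step 3: the consequences under $\inf_n\lambda_n>0$.} If $\inf_n\lambda_n=:\lambda_*>0$, then $\|\mathcal W\|\le\lambda_*^{-1/2}$ and $\|\mathcal W^{-1}\|\le1$. The identity then holds as bounded operators, so $\mathcal M_k=\mathcal W\mathcal K\mathcal W^{-1}$ with $\mathcal K$ skew-adjoint. Similarity preserves the spectrum, and a skew-adjoint operator has spectrum in $i\R$, so $\spec(\mathcal M_k)\subset i\R$. Moreover $e^{t\mathcal M_k}=\mathcal W e^{t\mathcal K}\mathcal W^{-1}$, where $e^{t\mathcal K}$ is unitary for all real $t$; for instance $iK$ is self-adjoint and one can apply the spectral theorem, or note that $\frac{d}{dt}\|e^{t\mathcal K}c\|^2=0$. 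This gives
\[
\sup_{t\in\R}\|e^{t\mathcal M_k}\|\le\|\mathcal W\|\|\mathcal W^{-1}\|\le\lambda_*^{-1/2}.
\]
There is no real obstacle in this argument; the computation of $\Sigma^*$ together with the index shift in Step 1 is the only step requiring care.
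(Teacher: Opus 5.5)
Your proposal is correct and follows essentially the same route as the paper. Both compute the conjugation entrywise on finitely supported sequences, identify $\Sigma^*$ through the index shift, and use $\|\Sigma\|\le\sup_n\lambda_n\le 1$ to extend by continuity to a bounded skew-adjoint operator. Both then obtain $\spec(\mathcal M_k)\subset i\R$ and $\sup_{t\in\R}\|e^{t\mathcal M_k}\|\le\lambda_*^{-1/2}$ from the bounded similarity $\mathcal M_k=\mathcal W\mathcal Q\mathcal W^{-1}$, where $e^{t\mathcal Q}$ is unitary.
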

\begin{proof}
Let $c_{00}(\mathbb Z)$ denote the finitely supported sequences.
On $c_{00}(\mathbb Z)$,
\begin{align*}
(\mathcal{W}^{-1}\mathcal{M}_k\mathcal{W})_{n,m}
=
\lambda_n^{1/2}(\mathcal{M}_k)_{n,m}\lambda_m^{-1/2}.
\end{align*}
Since
\begin{align*}
(\mathcal{M}_k)_{n,n-1}=-\frac{k}{2}\lambda_{n-1},
\qquad
(\mathcal{M}_k)_{n,n+1}=\frac{k}{2}\lambda_{n+1},
\end{align*}
it follows that
\begin{align*}
(\mathcal{W}^{-1}\mathcal{M}_k\mathcal{W}c)_n
=
-\frac{k}{2}
\left[
(\lambda_{n-1}\lambda_n)^{1/2}c_{n-1}
-
(\lambda_n\lambda_{n+1})^{1/2}c_{n+1}
\right].
\end{align*}
Defining
$
(\Sigma c)_n
=
(\lambda_{n-1}\lambda_n)^{1/2}c_{n-1},
$
we have
$
(\Sigma^*c)_n
=
(\lambda_n\lambda_{n+1})^{1/2}c_{n+1},
$
and hence
\begin{align*}
\mathcal{W}^{-1}\mathcal{M}_k\mathcal{W}
=
-\frac{k}{2}(\Sigma-\Sigma^*)
\qquad\text{on }c_{00}(\mathbb Z).
\end{align*}

Moreover,
\begin{align*}
\|\Sigma c\|_{\ell^2}^2
=
\sum_{n\in\mathbb Z}
\lambda_{n-1}\lambda_n|c_{n-1}|^2
\le
\left(\sup_n\lambda_n\right)^2
\|c\|_{\ell^2}^2.
\end{align*}
Thus
\begin{align*}
\|\Sigma\|_{\ell^2\to\ell^2}
\le
\sup_n\lambda_n
\le 1,
\end{align*}
so the preceding identity extends uniquely to a bounded operator
\begin{align*}
\mathcal{Q}:=-\frac{k}{2}(\Sigma-\Sigma^*)
\quad\text{on }\ell^2(\mathbb Z).
\end{align*}
Since
\begin{align*}
\mathcal{Q}^*
=
-\frac{k}{2}(\Sigma^*-\Sigma)
=
-\mathcal{Q},
\end{align*}
the operator $\mathcal{Q}$ is skew-adjoint.

If
$
\lambda_*:=\inf_{n\in\mathbb Z}\lambda_n>0,
$
then $\mathcal{W}$ and $\mathcal{W}^{-1}$ are bounded, with
\begin{align*}
\|\mathcal{W}\|=\lambda_*^{-1/2},
\qquad
\|\mathcal{W}^{-1}\|
=
\left(\sup_n\lambda_n\right)^{1/2}
\le 1.
\end{align*}
Therefore
\begin{align*}
\mathcal{Q}=\mathcal{W}^{-1}\mathcal{M}_k\mathcal{W},
\qquad
\mathcal{M}_k=\mathcal{W}\mathcal{Q}\mathcal{W}^{-1},
\end{align*}
and bounded similarity yields
\begin{align*}
\sigma(\mathcal{M}_k)=\sigma(\mathcal{Q})\subset i\mathbb R.
\end{align*}
Finally,
\begin{align*}
e^{t\mathcal{M}_k}=\mathcal{W}e^{t\mathcal{Q}}\mathcal{W}^{-1},
\qquad
\|e^{t\mathcal{Q}}\|_{\ell^2\to\ell^2}=1,
\quad t\in\mathbb R,
\end{align*}
whence
\begin{align*}
\sup_{t\in\mathbb R}\|e^{t\mathcal{M}_k}\|_{\ell^2\to\ell^2}
\le
\|\mathcal{W}\|\,\|\mathcal{W}^{-1}\|
\le
\lambda_*^{-1/2}.
\end{align*}
\end{proof}

\begin{remark}
The weight $\mathcal{W}$ has a transparent meaning: $\sum_n\lambda_n|c_n|^2$ is (up to the sign of $\lambda_0$ in the unstable range) the quadratic form of $1+\Delta^{-1}$ on the mode $k$, i.e.\ the classical Arnold-type energy--Casimir functional $\int \omega(\omega+\Delta^{-1}\omega)$ {(see, e.g., \cite{LiYanguang})} restricted to the $k$-th mode. The conservation law below is thus the linearized energy--Casimir identity, which the anisotropic dissipation respects exactly rather than merely dissipating.
\end{remark}

The preceding operator representation also identifies the essential
spectrum of $\mathcal{M}_k$. This observation will be useful in the long-wave
regime, since it shows that every spectral point off the imaginary
axis is a discrete eigenvalue rather than a point of the continuous
spectrum.

\begin{lemma}[Essential spectrum]\label{lem:essential-spectrum}
For every $k\neq0$, the essential spectrum of $\mathcal{M}_k$ is
\begin{align*}
\sigma_{\mathrm{ess}}(\mathcal{M}_k)
=
i[-|k|,|k|].
\end{align*}
Here $\sigma_{\mathrm{ess}}$ denotes the Fredholm essential spectrum.
Consequently, every spectral point of $\mathcal{M}_k$ outside
$i[-|k|,|k|]$, and in particular every spectral point with nonzero
real part, is an isolated eigenvalue of finite algebraic multiplicity.
\end{lemma}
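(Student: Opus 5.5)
The idea is to write $\mathcal{M}_k$ as a compact perturbation of a constant-coefficient (Toeplitz/Laurent) operator and then use Weyl's theorem on the stability of the Fredholm essential spectrum. Since $\lambda_n^{(k)}=1-(k^2+n^2)^{-1}\to 1$ as $|n|\to\infty$, we write $\mathcal{D}=\mathcal{I}+\mathcal{E}$ with $\mathcal{E}=\diag\bigl(-(k^2+n^2)^{-1}\bigr)$. This diagonal operator has entries tending to zero, so $\mathcal{E}$ is compact on $\ell^2(\mathbb Z)$, being a norm limit of finite-rank truncations. By \eqref{operatorform},
\begin{align*}
\mathcal{M}_k=-\frac{k}{2}\bigl(\mathcal{S}-\mathcal{S}^{-1}\bigr)-\frac{k}{2}\bigl(\mathcal{S}-\mathcal{S}^{-1}\bigr)\mathcal{E}=:\mathcal{M}_k^{\infty}+\mathcal{K},
\end{align*}
and $\mathcal{K}$ is compact because it is a bounded operator composed with a compact one. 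Weyl's theorem for the Fredholm essential spectrum then gives $\sigma_{\mathrm{ess}}(\mathcal{M}_k)=\sigma_{\mathrm{ess}}(\mathcal{M}_k^\infty)$.

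Next we compute the spectrum of $\mathcal{M}_k^\infty$. Under the Fourier series unitary $\ell^2(\mathbb Z)\to L^2(\mathbb T)$, $c\mapsto\sum c_n e^{in\theta}$, the shift $\mathcal{S}$ becomes multiplication by $e^{i\theta}$. Hence $\mathcal{M}_k^\infty$ becomes multiplication by
\[
-\tfrac{k}{2}\bigl(e^{i\theta}-e^{-i\theta}\bigr)=-ik\sin\theta .
\]
The spectrum of a multiplication operator is the essential range of its symbol, here $i[-|k|,|k|]$. This range is a continuum with no isolated points, and no point of it is an eigenvalue of infinite multiplicity issue that would fall outside the essential spectrum. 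So $\mathcal{M}_k^\infty-z$ fails to be Fredholm for every $z$ in the range: it is normal, and for normal operators the spectrum minus the isolated eigenvalues of finite multiplicity is exactly the essential spectrum. This gives $\sigma_{\mathrm{ess}}(\mathcal{M}_k^\infty)=i[-|k|,|k|]$, and therefore the same identity for $\mathcal{M}_k$.

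For the consequence, take $z\notin i[-|k|,|k|]$ in $\sigma(\mathcal{M}_k)$. Then $\mathcal{M}_k-z$ is Fredholm. The complement of $i[-|k|,|k|]$ is connected, and it contains points of the resolvent set, since $|z|>\|\mathcal{M}_k\|$ is such a point. Along this connected set the Fredholm index is locally constant, so it equals $0$ throughout. The analytic Fredholm alternative (Gohberg's theorem) then shows that the spectrum in this unbounded component consists only of isolated eigenvalues of finite algebraic multiplicity. Points with nonzero real part are not in $i\mathbb R$, so they fall under this case.

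The only point needing care is which notion of essential spectrum is being used. Weyl's invariance under compact perturbations holds for the Fredholm essential spectrum, and the discreteness conclusion needs the complement of the essential spectrum to be connected. Both are satisfied here because the essential spectrum is a segment, so its complement has a single unbounded component.
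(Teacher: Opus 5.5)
Your proposal is correct and follows the paper's proof. It uses the same splitting $\mathcal{M}_k=\mathcal{M}_k^\infty+\text{compact}$ via $\mathcal{D}=\mathcal{I}+(\mathcal{D}-\mathcal{I})$, the same Fourier symbol $-ik\sin\theta$, and Weyl invariance of the Fredholm essential spectrum. The only variation is the last step: the paper factors $\mathcal{M}_k-z\mathcal{I}=[\mathcal{I}+(\mathcal{M}_k-\mathcal{M}_k^\infty)(\mathcal{M}_k^\infty-z\mathcal{I})^{-1}](\mathcal{M}_k^\infty-z\mathcal{I})$ and applies the analytic Fredholm theorem to the identity-plus-compact factor, whereas you use index constancy on the connected complement of $i[-|k|,|k|]$ together with Gohberg's theorem, which is equally valid here.
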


\begin{proof}
Writing $\mathcal{D}=\mathcal{I}+(\mathcal{D}-\mathcal{I})$ in the operator representation
\eqref{operatorform}, define the limiting Laurent operator
\begin{align*}
\mathcal{M}_k^\infty
:=
-\frac{k}{2}\bigl(\mathcal{S}-\mathcal{S}^{-1}\bigr).
\end{align*}
Then
\begin{align*}
\mathcal{M}_k-\mathcal{M}_k^\infty
=
-\frac{k}{2}
\left[
\mathcal{S}(\mathcal{D}-\mathcal{I})-\mathcal{S}^{-1}(\mathcal{D}-\mathcal{I})
\right].
\end{align*}
Since
\begin{align*}
\mathcal{D}-\mathcal{I}
=
\operatorname{diag}
\left(
-\frac{1}{k^2+n^2}
\right)_{n\in\mathbb Z}
\end{align*}
has diagonal entries tending to zero as $|n|\to\infty$, it is
compact on $\ell^2(\mathbb Z)$. Hence $\mathcal{M}_k-\mathcal{M}_k^\infty$ is compact.

Under the discrete Fourier transform on $\ell^2(\mathbb Z)$, the
operator $\mathcal{M}_k^\infty$ is unitarily equivalent to multiplication on
$L^2(\mathbb T)$ by
\begin{align*}
-\frac{k}{2}
\left(e^{i\theta}-e^{-i\theta}\right)
=
-ik\sin\theta.
\end{align*}
Therefore,
\begin{align*}
\sigma(\mathcal{M}_k^\infty)
=
\sigma_{\mathrm{ess}}(\mathcal{M}_k^\infty)
=
\operatorname{ess\,ran}(-ik\sin\theta)
=
i[-|k|,|k|].
\end{align*}
By invariance of the Fredholm essential spectrum under compact
perturbations,
\begin{align*}
\sigma_{\mathrm{ess}}(\mathcal{M}_k)
=
\sigma_{\mathrm{ess}}(\mathcal{M}_k^\infty)
=
i[-|k|,|k|].
\end{align*}

It remains to identify the spectral points outside this interval.
For $z\notin i[-|k|,|k|]$, the operator
$\mathcal{M}_k^\infty-z\mathcal{I}$ is invertible, and
\begin{align*}
\mathcal{M}_k-z\mathcal{I}
=
\left[
\mathcal{I}+(\mathcal{M}_k-\mathcal{M}_k^\infty)(\mathcal{M}_k^\infty-z\mathcal{I})^{-1}
\right]
(\mathcal{M}_k^\infty-z\mathcal{I}).
\end{align*}
The first factor is an analytic Fredholm family of index zero on
$\mathbb C\setminus i[-|k|,|k|]$, being the identity plus a compact
operator. It is invertible for $|z|$ sufficiently large; hence the
analytic Fredholm theorem implies that its noninvertibility set is
discrete and that the corresponding characteristic values have finite
algebraic multiplicity. Since the Fredholm index is zero, every such
noninvertibility point has a nontrivial kernel. Consequently, every
spectral point of $\mathcal{M}_k$ outside $i[-|k|,|k|]$ is an isolated
eigenvalue of finite algebraic multiplicity.
\end{proof}

\begin{corollary}[Realization of the spectral bound]\label{cor:unstable-eigenmode}
Fix $k\neq0$ and suppose that $\Lambda(k)>0$. Then there exist
$z_k\in\sigma_{\mathrm p}(\mathcal{M}_k)$ and
$c_k\in\ell^2(\mathbb Z)\setminus\{0\}$ such that
\begin{align*}
\mathcal{M}_kc_k=z_kc_k,
\qquad
\operatorname{Re}z_k=\Lambda(k).
\end{align*}
Moreover, $z_k$ is an isolated eigenvalue of finite algebraic
multiplicity. Since
\begin{align*}
\mathcal A_{k,\nu}\,c_k
=
\bigl(az_k-\nu k^2\bigr)c_k,
\end{align*}
the $k$-th viscous mode possesses an exponentially growing normal
mode if and only if
\begin{align*}
a\Lambda(k)>\nu k^2.
\end{align*}
In that case, its maximal modal growth exponent is
$
a\Lambda(k)-\nu k^2.
$
\end{corollary}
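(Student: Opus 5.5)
The proof combines compactness of the spectrum with Lemma~\ref{lem:essential-spectrum}, and then reads off the viscous statement through the factorization \eqref{operatorform}.

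\textbf{Step 1: the supremum is attained.} Since $\mathcal{M}_k$ is bounded, $\sigma(\mathcal{M}_k)$ is a nonempty compact subset of $\{|z|\le\|\mathcal{M}_k\|\}$. The function $z\mapsto\operatorname{Re}z$ is continuous, so there is $z_k\in\sigma(\mathcal{M}_k)$ with $\operatorname{Re}z_k=\Lambda(k)$.

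\textbf{Step 2: $z_k$ is a genuine eigenvalue.} By hypothesis $\operatorname{Re}z_k=\Lambda(k)>0$, so $z_k\notin i[-|k|,|k|]$. Lemma~\ref{lem:essential-spectrum} then says $z_k$ is an isolated eigenvalue of finite algebraic multiplicity. In particular there is $c_k\in\ell^2(\mathbb Z)\setminus\{0\}$ with $\mathcal{M}_kc_k=z_kc_k$.

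This is the only step with real content. The risk is that the supremum of the real part might sit on the essential spectrum. That is excluded precisely because the essential spectrum lies on the imaginary axis and $\Lambda(k)>0$.

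\textbf{Step 3: transfer to the viscous operator.} From $\mathcal A_{k,\nu}=-\nu k^2\mathcal I+a\mathcal{M}_k$ we get $\mathcal A_{k,\nu}c_k=(az_k-\nu k^2)c_k$. Spectral mapping for affine functions of a bounded operator gives
$\sigma(\mathcal A_{k,\nu})=a\sigma(\mathcal{M}_k)-\nu k^2$,
and $a>0$ preserves the ordering of real parts. Hence the maximal real part over $\sigma(\mathcal A_{k,\nu})$, and in particular over its eigenvalues, equals $a\Lambda(k)-\nu k^2$, and it is attained by the eigenvalue $az_k-\nu k^2$.

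\textbf{Step 4: the equivalence.} A normal mode $e^{t\mathcal A_{k,\nu}}c=e^{\mu t}c$ with $\mathcal A_{k,\nu}c=\mu c$ grows exponentially iff $\operatorname{Re}\mu>0$.
\begin{itemize}
\item If $a\Lambda(k)>\nu k^2$, then $\mu=az_k-\nu k^2$ has $\operatorname{Re}\mu=a\Lambda(k)-\nu k^2>0$, so $c_k$ is a growing normal mode.
\item Conversely, any eigenvalue $\mu$ of $\mathcal A_{k,\nu}$ satisfies $\operatorname{Re}\mu\le a\Lambda(k)-\nu k^2$ by Step 3. So if $a\Lambda(k)\le\nu k^2$, no eigenvalue has positive real part and no growing normal mode exists.
\end{itemize}
In the growing case, Step 3 shows the maximal modal growth exponent is $a\Lambda(k)-\nu k^2$.
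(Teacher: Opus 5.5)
Your proposal is correct and follows the paper's proof essentially step for step. Both arguments use compactness of $\sigma(\mathcal{M}_k)$ to attain the spectral bound, then $\Lambda(k)>0$ together with Lemma~\ref{lem:essential-spectrum} to place $z_k$ outside $i[-|k|,|k|]$ as an isolated eigenvalue of finite algebraic multiplicity, and finally the affine relation $\mathcal A_{k,\nu}=a\mathcal{M}_k-\nu k^2\mathcal{I}$ (with $a>0$); your Step~4 just makes explicit the ``only if'' direction that the paper leaves to ``the remaining claims follow.''
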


\begin{proof}
The compactness of $\sigma(\mathcal{M}_k)$ implies that its spectral bound is
attained at some $z_k\in\sigma(\mathcal{M}_k)$. Since
\begin{align*}
\operatorname{Re}z_k=\Lambda(k)>0,
\end{align*}
we have $z_k\notin\sigma_{\mathrm{ess}}(\mathcal{M}_k)=i[-|k|,|k|]$.
Lemma~\ref{lem:essential-spectrum} therefore shows that $z_k$ is an
isolated eigenvalue of finite algebraic multiplicity. The remaining
claims follow from
$
\mathcal A_{k,\nu}=a\mathcal{M}_k-\nu k^2\mathcal{I}.
$
\end{proof}

\vskip .1in
\section{Proofs of Theorems \ref{thm:A}--\ref{thm:B}}
\subsection{The stable range $|k|>1$: proof of Theorem \ref{thm:A}}
\begin{proof}[Proof of Theorem~\ref{thm:A}]
Fix $|k|>1$, and write $\lambda_n=\lambda_n^{(k)}$. Then
\begin{align*}
\lambda_*:=1-\frac{1}{k^2}
=\lambda_0
\leq \lambda_n<1,
\qquad n\in\mathbb Z,
\end{align*}
so that $\lambda_*>0$.

Define $d(t):=e^{\nu k^2t}c(t),$
then
\begin{align*}
\frac{d}{dt}d(t)=a\mathcal{M}_kd(t).
\end{align*}
Set
\begin{align*}
\mathcal{W}=\operatorname{diag}(\lambda_n^{-1/2}),
\qquad
h(t):=\mathcal{W}^{-1}d(t),
\qquad
h_n(t)=\lambda_n^{1/2}d_n(t).
\end{align*}
Since $\mathcal{W}$ is independent of $t$, a direct computation  yields
\begin{align*}
\frac{d}{dt}h(t)
={a\mathcal{W}^{-1}\mathcal{M}_k\mathcal{W}} h(t)
=
a\mathcal{G}h(t),
\qquad
\mathcal{G}:=\mathcal{W}^{-1}\mathcal{M}_k\mathcal{W}.
\end{align*}
By Lemma~\ref{lem:skew},
$
\mathcal{G}^*=-\mathcal{G}.
$
Hence,
\begin{align*}
\begin{aligned}
\frac{d}{dt}\|h(t)\|_{\ell^2}^2
&=
2a\,\operatorname{Re}
\left\langle \mathcal{G}h(t),h(t)\right\rangle_{\ell^2}=0.
\end{aligned}
\end{align*}
Therefore, we obtain the conservation law
\begin{align*}
\sum_{n\in\mathbb Z}\lambda_n|d_n(t)|^2
=
\sum_{n\in\mathbb Z}\lambda_n|d_n(0)|^2,
\qquad t\geq0.
\end{align*}
Using $d(t)=e^{\nu k^2t}c(t)$ once again, this is equivalent to
\begin{align*}
e^{2\nu k^2t}
\sum_{n\in\mathbb Z}\lambda_n|c_n(t)|^2
=
\sum_{n\in\mathbb Z}\lambda_n|c_n(0)|^2.
\end{align*}

Consequently,
\begin{align*}
\lambda_*\|d(t)\|_{\ell^2}^2
\leq
\sum_n\lambda_n|d_n(t)|^2
=
\sum_n\lambda_n|d_n(0)|^2
\leq
\|d(0)\|_{\ell^2}^2,
\end{align*}
which gives
\begin{align*}
\|d(t)\|_{\ell^2}
\leq
\lambda_*^{-1/2}\|d(0)\|_{\ell^2}.
\end{align*}
Conversely,
\begin{align*}
\|d(t)\|_{\ell^2}^2
\geq
\sum_n\lambda_n|d_n(t)|^2
=
\sum_n\lambda_n|d_n(0)|^2
\geq
\lambda_*\|d(0)\|_{\ell^2}^2,
\end{align*}
and hence
\begin{align*}
\|d(t)\|_{\ell^2}
\geq
\lambda_*^{1/2}\|d(0)\|_{\ell^2}.
\end{align*}

Returning to $c(t)=e^{-\nu k^2t}d(t)$, we obtain
\begin{align*}
\lambda_*^{1/2}e^{-\nu k^2t}\|c(0)\|_{\ell^2}
\le
\|c(t)\|_{\ell^2}
\le
\lambda_*^{-1/2}e^{-\nu k^2t}\|c(0)\|_{\ell^2}.
\end{align*}
Substituting $\lambda_*=1-k^{-2}$ yields \eqref{twosided}. The constants
are independent of $a$, $\nu$, and $t$. In particular, the lower bound
rules out any uniform decay estimate on the $k$-th mode with an exponential
rate strictly larger than $\nu k^2$.
\end{proof}

\begin{remark}[Sharpness of the diffusive rate]
The lower bound in \eqref{twosided} implies that, for every
$|k|>1$,
\begin{align*}
\bigl\|e^{t\mathcal A_{k,\nu}}\bigr\|_{\mathcal B(\ell^2)}
\geq
\left(1-\frac{1}{k^2}\right)^{1/2}
e^{-\nu k^2t},
\qquad t\geq0.
\end{align*}
Consequently, no estimate of the form
\begin{align*}
\bigl\|e^{t\mathcal A_{k,\nu}}\bigr\|_{\mathcal B(\ell^2)}
\leq
C e^{-\gamma t},
\qquad
\gamma>\nu k^2,
\end{align*}
can hold uniformly in time, irrespective of the shear amplitude $a$.
Together with the upper bound in \eqref{twosided}, this yields
\begin{align*}
\lim_{t\to\infty}
\frac1t
\log
\bigl\|e^{t\mathcal A_{k,\nu}}\bigr\|_{\mathcal B(\ell^2)}
=
-\nu k^2.
\end{align*}
Thus the bare diffusive rate is sharp in the fixed-mode vorticity
norm.
\end{remark}

\begin{remark}[Uniformity in $a$ and inviscid limit]
Since the constants in \eqref{twosided} do not depend on $\nu$, Theorem~\ref{thm:A} contains the inviscid statement: for $\nu=0$ and $|k|>1$ the linearized Euler dynamics on the $k$-th mode is bounded above and below, uniformly in time --- the linearized energy--Casimir stability of Kolmogorov flow at high horizontal wavenumbers --- and the viscous dynamics is exactly this bounded group multiplied by $e^{-\nu k^2t}$.
\end{remark}

\subsection{The marginal case $|k|=1$: proof of Theorem
\ref{thm:k1}}
The proof is divided into three steps. We first reduce the critical
dynamics to two half-line systems coupled to the zero mode. We then
compute the zero-energy spectral density of the associated Jacobi
operator, and finally use this information to derive the sharp
semigroup bounds.

\begin{proof}
For $ |k|=1 $, the weight degenerates at exactly one site:
\begin{align*}
\lambda_0=0,
\qquad
\lambda_{\pm1}=\frac12,
\qquad
\lambda_n=1-\frac{1}{1+n^2}\geq\frac12
\quad (n\neq0).
\end{align*}

We first consider $k=1$. Since
$
\mathcal{M}_{-1}=-\mathcal{M}_1,
$
while the unitary reflection
$
(\mathcal{R}c)_n=c_{-n}
$
satisfies
$
\mathcal{R}\mathcal{M}_1\mathcal{R}^{-1}=-\mathcal{M}_1,
$
we have
\begin{align*}
\|e^{\tau \mathcal{M}_{-1}}\|
=
\|e^{-\tau \mathcal{M}_1}\|
=
\|Re^{\tau \mathcal{M}_1}\mathcal{R}^{-1}\|
=
\|e^{\tau \mathcal{M}_1}\|.
\end{align*}
It is therefore enough to analyze $\mathcal{M}_1$.

For the inviscid analysis, introduce the rescaled time
$
\tau=at
$
and set
$
d(\tau)
=
e^{(\nu/a)\tau}c(\tau/a).
$
Then
\begin{align*}
\partial_\tau d=\mathcal{M}_1d.
\end{align*}

\medskip
\noindent
\textbf{Step 1: Half-line reduction.}
We first separate the positive and negative vertical modes and isolate
their coupling to the neutral coefficient $d_0$.

Since $\lambda_0=0$, the positive and negative nonzero modes evolve
autonomously. For $n\geq1$, define
\begin{align*}
h_n^+=\sqrt{\lambda_n}\,d_n,
\qquad
h_n^-=\sqrt{\lambda_n}\,d_{-n}.
\end{align*}
A direct computation gives
\begin{align}
\label{half-line-evolution}
\partial_\tau h^+=\mathcal{G}h^+,
\qquad
\partial_\tau h^-=-\mathcal{G}h^-,
\end{align}
where $\mathcal{G}$ is the bounded skew-adjoint operator on
$\ell^2(\mathbb N)$ given by
\begin{align*}
(\mathcal{G}h)_n
=
-b_{n-1}h_{n-1}+b_nh_{n+1},
\qquad b_0=0,
\end{align*}
with
\begin{align}
\label{critical-jacobi-coefficients}
b_n
=
\frac12\sqrt{\lambda_n\lambda_{n+1}}
=
\frac12
\frac{n(n+1)}
{\sqrt{(n^2+1)((n+1)^2+1)}}.
\end{align}
The zero mode satisfies
\begin{align}
\label{zero-mode-forcing}
\partial_\tau d_0
=
\frac14(d_1-d_{-1})
=
\beta(h_1^+-h_1^-),
\qquad
\beta=\frac{1}{2\sqrt2}.
\end{align}

Let
$
\mathcal{D}_+=\operatorname{diag}(\lambda_n)_{n\geq1}.
$
The map
$
d\longmapsto(d_0,h^+,h^-)
$
is an isometry from $\ell^2(\mathbb Z)$ onto
\begin{align*}
\mathcal Y
=
\mathbb C\oplus\ell^2(\mathbb N)\oplus\ell^2(\mathbb N)
\end{align*}
when $\mathcal Y$ is equipped with the norm
\begin{align}
\label{X-norm}
\|(z,h^+,h^-)\|_{\mathcal Y}^2
=
|z|^2
+
\|\mathcal{D}_+^{-1/2}h^+\|_{\ell^2}^2
+
\|\mathcal{D}_+^{-1/2}h^-\|_{\ell^2}^2.
\end{align}
Indeed,
\begin{align*}
\frac12\leq\lambda_n<1,
\qquad n\geq1,
\end{align*}
so $\mathcal{D}_+^{1/2}$ and $\mathcal{D}_+^{-1/2}$ are both bounded. In particular,
the norm in \eqref{X-norm} is equivalent, with absolute constants,
to the standard product norm.

\medskip
\noindent

\medskip
\noindent
\textbf{Step 2: Zero-energy spectral density.}
We first explain the spectral mechanism behind the critical growth.
The degeneracy $\lambda_0=0$ separates the nonzero vertical modes
into two half-line systems, while the neutral coefficient $d_0$ is
forced by their boundary values $h_1^\pm$. After the unitary
conjugation below, each half-line generator becomes $i\mathcal J$,
where $\mathcal J$ is a self-adjoint Jacobi operator. Thus the
long-time accumulation of the forcing of $d_0$ is governed by the
spectral measure of $\mathcal J$ associated with the boundary vector
$e_1$.

More precisely, the relevant forcing terms have the form
\begin{align*}
g_\tau^\pm
=
\int_0^\tau e^{\mp s\mathcal G}e_1\,ds.
\end{align*}
If $\mu$ denotes the spectral measure of $\mathcal J$ associated
with $e_1$, then the spectral theorem gives
\begin{align*}
\|g_\tau^\pm\|_{\ell^2}^2
=
\int_{\mathbb R}
\left|
\frac{e^{i\tau x}-1}{x}
\right|^2
d\mu(x).
\end{align*}
Hence the behavior of $\mu$ near $x=0$ determines the growth of
the integrated boundary forcing. An atom at zero would produce growth
of order $\tau$ in $\|g_\tau^\pm\|$, whereas a continuous,
nonvanishing density at zero produces
\begin{align*}
\|g_\tau^\pm\|_{\ell^2}^2
\sim
2\pi w(0)\tau,
\end{align*}
and therefore growth of order $\sqrt{\tau}$. The purpose of the
Weyl--Jost analysis below is precisely to exclude a zero eigenvalue,
establish the regularity of the absolutely continuous spectral density
at zero, and compute its value:
\begin{align*}
w(0)=\frac4\pi.
\end{align*}
This calculation yields both the exponent $1/2$ and the sharp
constant in the asymptotic law for $e^{\tau \mathcal{M}_k}$.

We now carry out this analysis. Define the unitary operator
\begin{align*}
(\mathcal Uf)_n=i^nf_n.
\end{align*}
Then
\begin{align*}
\mathcal U^{-1}\mathcal G\mathcal U=i\mathcal J,
\end{align*}
where $\mathcal J$ is the self-adjoint half-line Jacobi operator
\begin{align}
\label{critical-Jacobi}
(\mathcal Jf)_n
=
b_{n-1}f_{n-1}+b_nf_{n+1}.
\end{align}
By \eqref{critical-jacobi-coefficients},
\begin{align*}
b_n=\frac12+O(n^{-2}),
\end{align*}
and hence
\begin{align*}
\sum_{n=1}^{\infty}
\left|b_n-\frac12\right|<\infty.
\end{align*}

{We use the Jost-solution theory for Jacobi operators whose coefficients converge to those of the free operator in $\ell^1$. We emphasize that the coefficients \eqref{critical-jacobi-coefficients} satisfy
\begin{align*}
b_n-\frac12=-\frac{1}{2n^2}+O(n^{-3}),
\end{align*}
so that $\sum_n|b_n-\frac12|<\infty$ but $\sum_n n|b_n-\frac12|=\infty$. Hence the first-moment hypothesis, which is used in the scattering theory up to the band edges $E=\pm1$ (see \cite[Chapter~10]{TeschlJacobi}), is not available. This is harmless here, because $E=0$ lies in the interior of the band $(-1,1)$, and at interior energies the $\ell^1$ condition suffices: the Jost solutions exist and depend continuously on $E+i0$ for $E\in(-1,1)$, the Weyl functions of $\mathcal{J}$ admit continuous boundary values there, and the spectral measure of $\mathcal{J}$ associated with $e_1$ is absolutely continuous near $E$, provided the corresponding Jost denominator does not vanish. It also identifies the boundary value of the Weyl function with the outgoing Jost solution. See \cite{DamanikSimon} and \cite{TeschlJacobi}.}

In this case the zero-energy Jost solution can be computed explicitly.
Define
\begin{align*}
f_n(0)
=
\frac{(-i)^n}{\sqrt{\lambda_n}},
\qquad n\geq1.
\end{align*}
Then, for every $n\geq2$,
\begin{align*}
b_{n-1}f_{n-1}(0)+b_nf_{n+1}(0)=0.
\end{align*}
Moreover,
\begin{align*}
f_n(0)=(-i)^n(1+o(1)),
\qquad n\to\infty.
\end{align*}
Thus $f(0)$ is the normalized outgoing zero-energy Jost solution for
the tail recurrence. No boundary condition at $n=1$ is imposed at this stage; the
half-line boundary condition is encoded in the Weyl denominator below.

For $n\geq1$, let $\mathcal J^{(n)}$ denote the restriction of
$\mathcal J$ to the tail
$
\{n,n+1,\ldots\},
$
with the Dirichlet boundary condition at $n-1$, and define its Weyl
function by
\begin{align*}
m_n(z)
=
\left\langle
e_n,(\mathcal J^{(n)}-z)^{-1}e_n
\right\rangle,
\qquad
\operatorname{Im}z>0.
\end{align*}
With this resolvent convention, the Schur complement identity gives
\begin{align}
\label{Weyl-Schur-recursion}
m_n(z)
=
-\frac{1}{z+b_n^2m_{n+1}(z)}.
\end{align}

Let $f(z)$ denote the outgoing Jost solution normalized by the
standard free asymptotics at infinity. With this normalization, its
boundary value at zero is
\begin{align*}
f_n(0+i0)=\frac{(-i)^n}{\sqrt{\lambda_n}}.
\end{align*}
 The standard Weyl-solution representation gives
\begin{align}
\label{Weyl-solution-representation}
m_{n+1}(z)
=
-\frac{f_{n+1}(z)}{b_nf_n(z)}.
\end{align}
Substituting \eqref{Weyl-solution-representation} into
\eqref{Weyl-Schur-recursion}, we obtain
\begin{align}
\label{m-function-Jost-representation}
m_n(z)
=
\frac{f_n(z)}
{b_nf_{n+1}(z)-zf_n(z)},
\qquad
\operatorname{Im}z>0.
\end{align}

The short-range Jacobi theory cited above ensures that the outgoing
Weyl solution and the Weyl function admit continuous boundary values
in a neighborhood of $E=0$, provided the denominator in
\eqref{m-function-Jost-representation} does not vanish. Therefore,
for such $E$,
\begin{align}
\label{m-boundary-Jost-representation}
m_n(E+i0)
=
\frac{f_n(E+i0)}
{b_nf_{n+1}(E+i0)-Ef_n(E+i0)}.
\end{align}
For brevity, we write
\begin{align*}
f_n(E):=f_n(E+i0)
\end{align*}
in what follows.

At $E=0$, the explicit outgoing solution constructed above satisfies
\begin{align*}
f_n(0)
=
\frac{(-i)^n}{\sqrt{\lambda_n}}.
\end{align*}
Since
$
b_n
=
\frac12\sqrt{\lambda_n\lambda_{n+1}},
$
we have
\begin{align*}
b_nf_{n+1}(0)
=
\frac12\sqrt{\lambda_n}\,(-i)^{n+1}.
\end{align*}
Hence \eqref{m-boundary-Jost-representation} yields
\begin{align*}
m_n(0+i0)
=
\frac{f_n(0)}{b_nf_{n+1}(0)}
=
\frac{2i}{\lambda_n}.
\end{align*}
In particular, since $\lambda_1=\frac12$,
\begin{align*}
m_1(0+i0)=4i.
\end{align*}
Thus the Jost denominator is nonzero at $E=0$; by continuity, it
remains nonzero in a sufficiently small neighborhood of zero.

Let $\mu$ be the spectral measure of $\mathcal{J}$ associated with
$e_1$. It follows that, in some neighborhood of zero,
\begin{align*}
d\mu(x)=w(x)\,dx,
\end{align*}
where $w$ is continuous at zero. By the Stieltjes inversion formula,
\begin{align}
\label{zero-density}
w(0)
=
\frac1\pi\operatorname{Im}m_1(0+i0)
=
\frac4\pi.
\end{align}

We also record directly that zero is not an eigenvalue of $\mathcal{J}$.
Suppose that $\mathcal{J}u=0$. The equation at $n=1$ gives $u_2=0$,
and the recurrence then yields
\begin{align*}
u_{2j}=0,
\qquad
u_{2j+1}
=
(-1)^j
\left(\frac{\lambda_1}{\lambda_{2j+1}}\right)^{1/2}u_1.
\end{align*}
Since $\lambda_{2j+1}\to1$, this sequence belongs to
$\ell^2(\mathbb N)$ only when $u_1=0$. Thus
\begin{align*}
\ker\mathcal{J}=\{0\},
\qquad
\ker\mathcal{G}=\{0\}.
\end{align*}
Because $\mathcal{G}^*=-\mathcal{G}$, it follows that
\begin{align}
\label{dense-range-A}
\overline{\operatorname{Ran}\mathcal{G}}
=
(\ker\mathcal{G}^*)^\perp
=
(\ker\mathcal{G})^\perp
=
\ell^2(\mathbb N).
\end{align}

\medskip
\noindent
\textbf{Step 3: Semigroup growth and viscous amplification.}

We now return to the integrated boundary forcing introduced above and
set
\begin{align*}
g_\tau^+
=
\int_0^\tau e^{-s\mathcal G}e_1\,ds,
\qquad
g_\tau^-
=
\int_0^\tau e^{s\mathcal G}e_1\,ds.
\end{align*}
The spectral theorem makes the preceding heuristic precise:
\begin{align}\label{g-spectral-representation}
\|g_\tau^\pm\|_{\ell^2}^2
=
\int_{\mathbb R}
\left|
\frac{e^{i\tau x}-1}{x}
\right|^2
d\mu(x).
\end{align}

We now extract the large-$\tau$ asymptotics. Choose $\delta>0$ such
that
\begin{align*}
d\mu(x)=w(x)\,dx
\qquad\text{for }|x|<\delta,
\end{align*}
with $w$ continuous there. Since
\begin{align*}
K_\tau(x)
=
\frac1\tau
\left|
\frac{e^{i\tau x}-1}{x}
\right|^2
\end{align*}
is an approximate identity of total mass $2\pi$, we have
\begin{align*}
\lim_{\tau\to\infty}
\frac1\tau
\int_{|x|<\delta}
\left|
\frac{e^{i\tau x}-1}{x}
\right|^2
w(x)\,dx
=
2\pi w(0).
\end{align*}
On the complement,
\begin{align*}
\frac1\tau
\int_{|x|\geq\delta}
\left|
\frac{e^{i\tau x}-1}{x}
\right|^2
d\mu(x)
\leq
\frac{4}{\tau\delta^2}\mu(\mathbb R)
\longrightarrow0.
\end{align*}
Therefore, by \eqref{zero-density},
\begin{align*}
\lim_{\tau\to\infty}
\frac1\tau\|g_\tau^\pm\|_{\ell^2}^2
=
2\pi w(0)
=
8,
\end{align*}
and hence
\begin{align}
\label{g-growth}
\|g_\tau^\pm\|_{\ell^2}^2
=
8\tau+o(\tau).
\end{align}

We also need the corresponding estimate in the dual of the weighted
norm in \eqref{X-norm}. First, we claim that
\begin{align}
\label{g-weak-convergence}
\frac{g_\tau^\pm}{\sqrt{\tau}}
\rightharpoonup0
\qquad\text{in }\ell^2(\mathbb N).
\end{align}
The family on the left is bounded by \eqref{g-growth}. If
$v=\mathcal{G}u\in\operatorname{Ran}\mathcal{G}$, then, using
$\mathcal{G}^*=-\mathcal{G}$,
\begin{align*}
\left|
\left\langle
\frac{g_\tau^+}{\sqrt{\tau}},v
\right\rangle
\right|
=
\frac1{\sqrt{\tau}}
\left|
\left\langle
-\mathcal{G}g_\tau^+,u
\right\rangle
\right|.
\end{align*}
Since
\begin{align*}
\mathcal{G}g_\tau^+
=
e_1-e^{-\tau\mathcal{G}}e_1,
\end{align*}
we obtain
\begin{align*}
\left|
\left\langle
\frac{g_\tau^+}{\sqrt{\tau}},v
\right\rangle
\right|
\leq
\frac{2\|u\|}{\sqrt{\tau}}
\longrightarrow0.
\end{align*}
The same argument applies to $g_\tau^-$. Since
$\operatorname{Ran}\mathcal{G}$ is dense by
\eqref{dense-range-A}, this proves \eqref{g-weak-convergence}.

Now
\begin{align*}
\mathcal{I}-\mathcal{D}_+
=
\operatorname{diag}
\left(\frac{1}{1+n^2}\right)_{n\geq1}
\end{align*}
is compact. Hence
\begin{align*}
(\mathcal{I}-\mathcal{D}_+)\frac{g_\tau^\pm}{\sqrt{\tau}}
\longrightarrow0
\qquad\text{strongly in }\ell^2(\mathbb N),
\end{align*}
and consequently
\begin{align*}
\frac1\tau
\left\langle
(\mathcal{I}-\mathcal{D}_+)g_\tau^\pm,g_\tau^\pm
\right\rangle
\longrightarrow0.
\end{align*}
Combining this with \eqref{g-growth}, we obtain
\begin{align}
\label{weighted-g-growth}
\|\mathcal{D}_+^{1/2}g_\tau^\pm\|_{\ell^2}^2
=
8\tau+o(\tau).
\end{align}

Integrating \eqref{zero-mode-forcing}, we find
\begin{align*}
d_0(\tau)
=
d_0(0)
+
\mathcal F_\tau(h^+(0),h^-(0)),
\end{align*}
where
\begin{align*}
\mathcal F_\tau(h^+,h^-)
=
\beta
\int_0^\tau
\left[
\left\langle e_1,e^{s\mathcal{G}}h^+\right\rangle
-
\left\langle e_1,e^{-s\mathcal{G}}h^-\right\rangle
\right]\,ds.
\end{align*}
Equivalently,
\begin{align*}
\mathcal F_\tau(h^+,h^-)
=
\beta
\left(
\langle g_\tau^+,h^+\rangle
-
\langle g_\tau^-,h^-\rangle
\right).
\end{align*}

For any $g\in\ell^2(\mathbb N)$, the weighted dual norm satisfies
\begin{align*}
\sup_{h\neq0}
\frac{|\langle g,h\rangle|^2}
{\|\mathcal{D}_+^{-1/2}h\|_{\ell^2}^2}
=
\|\mathcal{D}_+^{1/2}g\|_{\ell^2}^2.
\end{align*}
Therefore,
\begin{align*}
\|\mathcal F_\tau\|^2
=
\beta^2
\left(
\|\mathcal{D}_+^{1/2}g_\tau^+\|_{\ell^2}^2
+
\|\mathcal{D}_+^{1/2}g_\tau^-\|_{\ell^2}^2
\right).
\end{align*}
Since $\beta^2=1/8$, \eqref{weighted-g-growth} gives
\begin{align}\label{F-growth}
\|\mathcal F_\tau\|^2
=
2\tau+o(\tau),
\qquad
\|\mathcal F_\tau\|
=
\sqrt{2\tau}\,(1+o(1)).
\end{align}

Under the isometric identification with $\mathcal Y$, the inviscid
semigroup has the upper triangular form
\begin{align*}
e^{\tau \mathcal{M}_1}
=
\begin{pmatrix}
1&\mathcal F_\tau\\
0&\mathcal V_\tau
\end{pmatrix},
\qquad
\mathcal V_\tau
=
e^{\tau\mathcal{G}}\oplus e^{-\tau\mathcal{G}}.
\end{align*}
Since $\mathcal{G}$ is skew-adjoint and
$\mathcal{D}_+^{\pm1/2}$ are bounded, the family
$\{\mathcal V_\tau\}_{\tau\in\mathbb R}$ is uniformly bounded on the
weighted space. Consequently,
\begin{align*}
\|\mathcal F_\tau\|
\leq
\|e^{\tau \mathcal{M}_1}\|
\leq
\|\mathcal F_\tau\|+C.
\end{align*}
Together with \eqref{F-growth}, this proves
\begin{align}\label{critical-inviscid-asymptotics-proof}
\|e^{\tau \mathcal{M}_1}\|
=
\sqrt{2\tau}\,(1+o(1)),
\qquad
\tau\to\infty.
\end{align}
By the symmetry argument at the beginning of the proof, the same
asymptotic formula holds for $k=-1$.

We next complement the preceding asymptotic formula with a global
inviscid bound, which will be needed to optimize the viscous
amplification uniformly in time.
The continuity and boundedness of $w$ near zero also imply
\begin{align*}
\|g_\tau^\pm\|_{\ell^2}^2
\leq
C(1+\tau),
\qquad \tau\geq0.
\end{align*}
Indeed, this follows from \eqref{g-spectral-representation} and
\begin{align*}
\left|
\frac{e^{i\tau x}-1}{x}
\right|^2
\leq
\min\left\{\tau^2,\frac4{x^2}\right\}.
\end{align*}
Hence
\begin{align}\label{critical-inviscid-global}
\|e^{\tau \mathcal{M}_k}\|
\leq
C(1+\sqrt{\tau}),
\qquad
\tau\geq0,
\qquad |k|=1.
\end{align}

We now return to the viscous evolution. The exact factorization gives
\begin{align*}
e^{t\mathcal A_{k,\nu}}
=
e^{-\nu t}e^{at\mathcal{M}_k},
\qquad |k|=1.
\end{align*}
Therefore,
\begin{align*}
\|e^{t\mathcal A_{k,\nu}}\|
\leq
C(1+\sqrt{at})e^{-\nu t},
\qquad t\geq0,
\end{align*}
which proves \eqref{critical-global-bound}.

It remains to justify the sharp maximal amplification. Set
\begin{align*}
\varepsilon=\frac{\nu}{a},
\qquad
N(\tau)=\|e^{\tau \mathcal{M}_k}\|.
\end{align*}
Then
\begin{align*}
\sup_{t\geq0}\|e^{t\mathcal A_{k,\nu}}\|
=
\sup_{\tau\geq0}e^{-\varepsilon\tau}N(\tau).
\end{align*}
After setting $y=\varepsilon\tau$, we obtain
\begin{align*}
\sqrt{\varepsilon}
\sup_{\tau\geq0}e^{-\varepsilon\tau}N(\tau)
=
\sup_{y\geq0}
e^{-y}\sqrt{\varepsilon}\,
N\left(\frac{y}{\varepsilon}\right).
\end{align*}
For each fixed $y>0$, \eqref{critical-inviscid-asymptotics-proof}
gives
\begin{align*}
\sqrt{\varepsilon}\,
N\left(\frac{y}{\varepsilon}\right)
\longrightarrow
\sqrt{2y}.
\end{align*}
The convergence is uniform on compact subsets of $(0,\infty)$.
Moreover, the global bound above yields
\begin{align*}
e^{-y}\sqrt{\varepsilon}\,
N\left(\frac{y}{\varepsilon}\right)
\leq
C e^{-y}(\sqrt{\varepsilon}+\sqrt y).
\end{align*}
This estimate controls uniformly the regions near $y=0$ and
$y=\infty$. Therefore,
\begin{align*}
\lim_{\varepsilon\downarrow0}
\sqrt{\varepsilon}
\sup_{\tau\geq0}e^{-\varepsilon\tau}N(\tau)
=
\max_{y\geq0}e^{-y}\sqrt{2y}
=
e^{-1/2},
\end{align*}
where the maximum is attained at $y=1/2$. Consequently,
\begin{align*}
\sup_{t\geq0}\|e^{t\mathcal A_{k,\nu}}\|
=
\left(\frac{a}{e\nu}\right)^{1/2}(1+o(1)),
\qquad
\frac{a}{\nu}\to\infty.
\end{align*}
 At
$
t_*=\frac{1}{2\nu},$
$
at_*=\frac{a}{2\nu}\to\infty,
$
the inviscid asymptotic \eqref{critical-inviscid-asymptotics}
gives
\begin{align*}
\begin{aligned}
\left\|e^{t_*\mathcal A_{k,\nu}}\right\|
&=
e^{-\nu t_*}
\left\|e^{at_*\mathcal{M}_k}\right\| \\
&=
e^{-1/2}
\left\|e^{\frac{a}{2\nu}\mathcal{M}_k}\right\| \\
&=
e^{-1/2}
\sqrt{\frac{a}{\nu}}\,(1+o(1)) \\
&=
\left(\frac{a}{e\nu}\right)^{1/2}(1+o(1)).
\end{aligned}
\end{align*}
This proves \eqref{critical-maximizing-time} and shows that
$t_*=(2\nu)^{-1}$ is asymptotically maximizing.

 The global estimate \eqref{critical-global-bound} gives
\begin{align*}
\begin{aligned}
\sup_{t\geq0}
\left\|e^{t\mathcal A_{k,\nu}}\right\|
&\leq
C\sup_{t\geq0}
(1+\sqrt{at})e^{-\nu t}\leq
C\left(1+\sqrt{\frac a\nu}\right).
\end{aligned}
\end{align*}
For $a/\nu$ sufficiently large, the matching lower bound follows
from \eqref{critical-viscous-asymptotics}. For $a/\nu$ bounded,
we use
\begin{align*}
\sup_{t\geq0}
\left\|e^{t\mathcal A_{k,\nu}}\right\|
\geq
\|\mathcal{I}\|=1.
\end{align*}
After adjusting the absolute constants, we obtain
\begin{align*}
\sup_{t\geq0}
\left\|e^{t\mathcal A_{k,\nu}}\right\|
\asymp
1+\sqrt{\frac a\nu}.
\end{align*}
This proves \eqref{critical-two-sided}.

Finally, \eqref{half-line-evolution} and the skew-adjointness of
$\mathcal{G}$ imply
\begin{align*}
\sum_{n\neq0}\lambda_n|d_n(\tau)|^2
=
\sum_{n\neq0}\lambda_n|d_n(0)|^2.
\end{align*}
Since $d(at)=e^{\nu t}c(t)$, this yields
\begin{align*}
e^{2\nu t}
\sum_{n\neq0}\lambda_n|c_n(t)|^2
=
\sum_{n\neq0}\lambda_n|c_n(0)|^2,
\end{align*}
which is the conservation law
\eqref{critical-conservation}.

Moreover, $\mathcal{M}_ke_0=0$, and hence
\begin{align*}
e^{t\mathcal A_{k,\nu}}e_0=e^{-\nu t}e_0.
\end{align*}
Combining this lower bound with
\begin{align*}
\|e^{t\mathcal A_{k,\nu}}\|
\leq
C(1+\sqrt{at})e^{-\nu t}
\end{align*}
gives
\begin{align*}
\lim_{t\to\infty}
\frac1t
\log
\|e^{t\mathcal A_{k,\nu}}\|
=
-\nu.
\end{align*}
This completes the proof.
\end{proof}

\begin{remark}[Transient growth at the critical wavenumbers]
The critical growth is generated by the coupling of the one-dimensional
zero mode to the uniformly bounded positive and negative half-line
subsystems. Thus the $\sqrt{t}$-growth is algebraic rather than
spectral: it produces substantial transient amplification, but does not
alter the spectral bound or the asymptotic exponential decay rate. Indeed,
\begin{align*}
\left.\frac{d}{dt}d_0(t)\right|_{t=0}
=
-\frac{ka}{4}\bigl(d_{-1}(0)-d_1(0)\bigr),
\end{align*}
so the neutral component is directly forced by the neighboring
nonzero modes.

\end{remark}

\subsection{The undamped kernel $k=0$: proof of Theorem \ref{thm:k0}}
\begin{proof}[Proof of Theorem~\ref{thm:k0}]
Let $\mathbb P_0$ denote the projection onto the horizontally
independent component:
\begin{align*}
(\mathbb P_0\omega)(x_2)
=
\frac{1}{L}\int_{\mathbb T_L}
\omega(x_1,x_2)\,dx_1
=
\widehat{\omega}_0(x_2).
\end{align*}
Since the coefficients in the linearized vorticity equation
\eqref{linvort} are independent of $x_1$, the horizontal Fourier
sectors are invariant. On the mode $k=0$, both
$\partial_1$ and $\partial_1^2$ vanish. Therefore,
\begin{align*}
\partial_t\widehat{\omega}_0=0,
\qquad
\widehat{\omega}_0(t)=\widehat{\omega}_0(0),
\qquad t\geq0.
\end{align*}

Recall that
\begin{align*}
\mathcal K_0
:=
\left\{
\omega(x_1,x_2)=g(x_2):
g\in L^2(\mathbb T_{2\pi}),\
\int_0^{2\pi}g(x_2)\,dx_2=0
\right\}.
\end{align*}
The zero-mean condition ensures that $\Delta^{-1}$ is well-defined
on this mode. The preceding identity consequently gives
\begin{align*}
\Lan g=0,
\qquad
e^{t\Lan }g=g,
\qquad
g\in\mathcal K_0,\quad t\geq0.
\end{align*}
Thus
\begin{align*}
\Lan \big|_{\mathcal K_0}=0,
\qquad
e^{t\Lan }\big|_{\mathcal K_0}=\mathcal{I}.
\end{align*}

We next identify this kernel at the velocity level. Given
$g\in\mathcal K_0$, there exists a periodic function
$f\in H^1(\mathbb T_{2\pi})$ satisfying
\begin{align*}
-f'=g.
\end{align*}
After fixing the mean of the horizontal velocity by imposing
\begin{align*}
\int_0^{2\pi}f(x_2)\,dx_2=0,
\end{align*}
the function $f$ is uniquely determined. The associated velocity
perturbation is
\begin{align*}
u=(f(x_2),0),
\end{align*}
which is divergence-free and has vorticity
\begin{align*}
\partial_1u_2-\partial_2u_1=-f'=g.
\end{align*}
Conversely, every mean-zero shear perturbation
\begin{align*}
u=(f(x_2),0),
\qquad
f\in H^1(\mathbb T_{2\pi}),
\qquad
\int_0^{2\pi}f(x_2)\,dx_2=0,
\end{align*}
has vorticity in $\mathcal K_0$. Hence, under the
zero-mean Biot--Savart correspondence,
\begin{align*}
\mathcal K_0
\longleftrightarrow
\mathcal S_0.
\end{align*}

Since $\mathcal K_0$ is infinite-dimensional and the semigroup acts
as the identity on it, no nonzero element of $\mathcal K_0$ decays.
Consequently, the linearized semigroup is not strongly stable on any
phase space containing $\mathcal K_0$.

Finally, the invariance of the horizontal Fourier sectors implies that
$\mathbb P_0$ commutes with the linearized semigroup:
\begin{align*}
\mathbb P_0e^{t\Lan }
=
e^{t\Lan }\mathbb P_0.
\end{align*}
Therefore,
\begin{align*}
\ker\mathbb P_0
=
\{\omega:\widehat{\omega}_0=0\}
\end{align*}
is an invariant complementary subspace. All decay and stability
statements for the nonzero horizontal modes may consequently be
restricted to $\ker\mathbb P_0$.

If the horizontal mean velocity is not prescribed, the additive
constant in $f$ is not fixed. In that case, the velocity kernel also
contains the constant horizontal mode $\operatorname{span}\{(1,0)\}$;
equivalently, one may allow arbitrary periodic $f$ in the definition
of the shear space.
\end{proof}

\begin{remark}[Manifold of shear equilibria]
The kernel in Theorem~\ref{thm:k0} is tangent to the manifold
\begin{align*}
\mathcal E
=
\Big\{(f(x_2),0): f\ \text{periodic}\Big\}
\end{align*}
of shear equilibria. Thus the natural nonlinear stability problem is
stability modulo $\mathcal E$, rather than asymptotic stability of the
individual Kolmogorov profile.
\end{remark}

\subsection{{The exact spectral shift and the unstable range: proof of Theorem \ref{thm:B}}}
\begin{proof}[Proof of Theorem~\ref{thm:B}]
\textup{(i)}
Since $\mathcal{M}_k\in\mathcal B(\ell^2(\mathbb Z))$, the operator
\begin{align*}
\mathcal A_{k,\nu}=a\mathcal{M}_k-\nu k^2\mathcal{I}
\end{align*}
is bounded on $\ell^2(\mathbb Z)$. By the translation and scaling
properties of the spectrum,
\begin{align*}
\spec(\mathcal A_{k,\nu})
=
a\,\spec(\mathcal{M}_k)-\nu k^2.
\end{align*}
In particular,
\begin{align*}
s(\mathcal A_{k,\nu})
:=
\sup\{\Rea z:z\in\spec(\mathcal A_{k,\nu})\}
=
a\Lambda(k)-\nu k^2.
\end{align*}

By the spectral mapping theorem,
\begin{align*}
\spec\bigl(e^{t\mathcal A_{k,\nu}}\bigr)
=
e^{t\spec(\mathcal A_{k,\nu})},
\qquad t>0,
\end{align*}
and hence
\begin{align*}
r\bigl(e^{t\mathcal A_{k,\nu}}\bigr)
=
e^{t s(\mathcal A_{k,\nu})},
\end{align*}
where $r(\cdot)$ denotes the spectral radius.

Applying the spectral-radius formula to
$T=e^{\mathcal A_{k,\nu}}$, we obtain
\begin{align*}
\lim_{m\to\infty}
\frac1m\log\|e^{m\mathcal{A}_{k,\nu}}\|
=
\log r(e^{\mathcal A_{k,\nu}})
=
s(\mathcal A_{k,\nu}).
\end{align*}
It remains to pass from integer times to arbitrary times. Let
$t=m+s$, where $m\in\mathbb N$ and $0\leq s<1$. Since the group
property yields
$
e^{t\mathcal A_{k,\nu}}
=
e^{s\mathcal A_{k,\nu}}
e^{m\mathcal A_{k,\nu}},
$
and hence
\begin{align*}
\bigl\|e^{m\mathcal A_{k,\nu}}\bigr\|
\leq
\bigl\|e^{-s\mathcal A_{k,\nu}}\bigr\|
\bigl\|e^{t\mathcal A_{k,\nu}}\bigr\|.
\end{align*}

Because $\mathcal A_{k,\nu}$ is bounded,
\begin{align*}
C_{k,\nu}
:=
\sup_{0\leq s\leq 1}
\max\Big\{
\bigl\|e^{s\mathcal A_{k,\nu}}\bigr\|,
\bigl\|e^{-s\mathcal A_{k,\nu}}\bigr\|
\Big\}
<\infty.
\end{align*}
Thus
\begin{align*}
C_{k,\nu}^{-1}
\bigl\|e^{m\mathcal A_{k,\nu}}\bigr\|
\leq
\bigl\|e^{t\mathcal A_{k,\nu}}\bigr\|
\leq
C_{k,\nu}
\bigl\|e^{m\mathcal A_{k,\nu}}\bigr\|.
\end{align*}
Since $m/t\to1$ as $t\to\infty$, the integer-time limit implies
\begin{align*}
\lim_{t\to\infty}
\frac{1}{t}
\log
\bigl\|e^{t\mathcal A_{k,\nu}}\bigr\|
=
s(\mathcal A_{k,\nu})
=
a\Lambda(k)-\nu k^2.
\end{align*}

\medskip
\noindent
\textup{(ii)}
Setting $\nu=0$ and $a=1$ in \eqref{tridiag} shows that $\mathcal{M}_k$
is the generator of the mode-$k$ linearization of the Euler equations
about \eqref{Kolmo} with unit amplitude. Thus
\begin{align*}
\Lambda(k)
=
\max\{\Rea z:z\in\spec(\mathcal{M}_k)\}
\end{align*}
is the inviscid spectral growth rate per unit amplitude.

Since
$
\lambda_n^{(-k)}=\lambda_n^{(k)},
$
the definition of $\mathcal{M}_k$ gives
\begin{align*}
\mathcal{M}_{-k}=-\mathcal{M}_k.
\end{align*}
Define the unitary involution $\mathcal{R}$ on $\ell^2(\mathbb Z)$ by
\begin{align*}
(\mathcal{R}c)_n:=c_{-n}.
\end{align*}
Using $\lambda_{-n}^{(k)}=\lambda_n^{(k)}$, we compute
\begin{align*}
\begin{aligned}
(\mathcal{R}\mathcal{M}_k\mathcal{R}c)_n
&=(\mathcal{M}_k\mathcal{R}c)_{-n} \\
&=-\frac{k}{2}
\left[
\lambda_{-n-1}^{(k)}(\mathcal{R}c)_{-n-1}
-
\lambda_{-n+1}^{(k)}(\mathcal{R}c)_{-n+1}
\right] \\
&=-\frac{k}{2}
\left[
\lambda_{n+1}^{(k)}c_{n+1}
-
\lambda_{n-1}^{(k)}c_{n-1}
\right] \\
&=-(\mathcal{M}_kc)_n.
\end{aligned}
\end{align*}
Thus
\begin{align*}
\mathcal{R}\mathcal{M}_k\mathcal{R}^{-1}=-\mathcal{M}_k.
\end{align*}
Consequently,
\begin{align*}
\spec(\mathcal{M}_k)=\spec(-\mathcal{M}_k)=-\spec(\mathcal{M}_k).
\end{align*}
Combining this symmetry with $\mathcal{M}_{-k}=-\mathcal{M}_k$, we find
\begin{align*}
\spec(\mathcal{M}_{-k})
=
\spec(-\mathcal{M}_k)
=
\spec(\mathcal{M}_k),
\end{align*}
and therefore
\begin{align*}
\Lambda(-k)=\Lambda(k).
\end{align*}

For $|k|>1$, Lemma~\ref{lem:skew} implies that $\mathcal{M}_k$ is similar,
through a bounded and boundedly invertible transformation, to a bounded
skew-adjoint operator. Hence
\begin{align*}
\spec(\mathcal{M}_k)\subset i\mathbb R,
\qquad
\Lambda(k)=0.
\end{align*}

Let now $|k|=1$. {The inviscid bound \eqref{critical-inviscid-global}, established in the proof of Theorem~\ref{thm:k1} without using $\nu>0$, yields}
\begin{align*}
\|e^{t\mathcal{M}_k}\|_{\ell^2\to\ell^2}
\leq C(1+\sqrt t),
\qquad t\geq0.
\end{align*}
For $z\in\spec(\mathcal{M}_k)$, the spectral mapping theorem gives
$
e^{tz}\in\spec(e^{t\mathcal{M}_k}),
$
and therefore
\begin{align*}
e^{t\Rea z}
\leq
r(e^{t\mathcal{M}_k})
\leq
\|e^{t\mathcal{M}_k}\|
\leq
C(1+\sqrt t).
\end{align*}
Taking logarithms, dividing by $t$, and letting $t\to\infty$, we obtain
$
\Rea z\leq0.
$
Thus
\begin{align*}
\spec(\mathcal{M}_k)
\subset
\{z\in\mathbb C:\Rea z\leq0\}.
\end{align*}
Since $\spec(\mathcal{M}_k)=-\spec(\mathcal{M}_k)$, the same conclusion applied to $-z$
gives $\Rea z\geq0$. Hence
\begin{align*}
\spec(\mathcal{M}_k)\subset i\mathbb R,
\qquad
\Lambda(\pm1)=0.
\end{align*}

\medskip
\noindent
\textup{(iii)}
Part~\textup{(i)} gives
\begin{align*}
s(\mathcal A_{k,\nu})
=
a\Lambda(k)-\nu k^2.
\end{align*}
Hence the $k$-th mode is spectrally unstable if and only if
\begin{align*}
a\Lambda(k)>\nu k^2.
\end{align*}
Whenever this condition holds, necessarily $\Lambda(k)>0$.
Corollary~\ref{cor:unstable-eigenmode} then yields an eigenvalue
$z_k\in\sigma_{\mathrm p}(\mathcal{M}_k)$ satisfying
\begin{align*}
\operatorname{Re}z_k=\Lambda(k).
\end{align*}
Consequently,
\begin{align*}
az_k-\nu k^2
\in
\sigma_{\mathrm p}(\mathcal A_{k,\nu})
\end{align*}
lies in the open right half-plane, and the corresponding eigenvector
generates an exponentially growing normal mode. Its growth exponent is
$
a\Lambda(k)-\nu k^2,
$
which, by part~\textup{(i)}, also equals the asymptotic operator-norm
growth exponent.

\medskip
\noindent
\textup{(iv)}
For $0<|k|<1$, the classical inviscid instability theory for the
Kolmogorov flow yields
$
\Lambda(k)>0,
$
see {\cite{MeshalkinSinai,FriedlanderStraussVishik,FriedlanderHoward,BelenkayaFriedlanderYudovich,Lin}}. Therefore
\begin{align*}
a>\frac{\nu k^2}{\Lambda(k)}
\end{align*}
is equivalent to
$
a\Lambda(k)-\nu k^2>0.
$
The instability criterion and the exact growth exponent now follow from
part \textup{(iii)}. In particular, for fixed $k$ and $\nu$, the
growth exponent depends linearly on $a$.
\end{proof}
\begin{remark}[Continued-fraction characterization]
For $0<|k|<1$, the discrete eigenvalues of $\mathcal{M}_k$ may also be
characterized by the classical continued-fraction method. Indeed, if
\begin{align*}
\mathcal{M}_kc=\sigma c,
\qquad
c\in\ell^2(\mathbb Z),
\end{align*}
and $r_n:=\lambda_n c_n$, then
\begin{align*}
\frac{\sigma}{\lambda_n}r_n
=
-\frac{k}{2}\bigl(r_{n-1}-r_{n+1}\bigr),
\qquad n\in\mathbb Z.
\end{align*}
For fixed $\sigma$ outside the essential spectrum, the
$\ell^2$-solutions on the two half-lines are encoded by convergent
continued fractions. Matching these solutions across $n=0$ yields
the corresponding secular equation for the discrete eigenvalues; see
{\cite{MeshalkinSinai,BelenkayaFriedlanderYudovich,LatushkinLiStanislavova}}. This characterization is not needed for the
spectral-shift and instability results above.
\end{remark}

\begin{remark}[Sharpness in $a$ and comparison with the classical threshold]
In the classical, fully dissipative (forced) problem the neutral curve is determined by a genuine competition between the $n$-dependent damping $\nu(k^2+n^2)$ and the advection, and the threshold is known only through the continued-fraction analysis \cite{MeshalkinSinai,Yudovich}. In the anisotropic problem the exact spectral shift of Theorem \ref{thm:B}(i) collapses this competition: the neutral curve is exactly $\{a\Lambda(k)=\nu k^2\}$, the bifurcation at the threshold occurs, at the level of the mode-$k$ spectrum, by a rigid horizontal translation of the inviscid spectrum, and the large-$a$ asymptotics of the growth rate is exactly linear. We regard this exact solvability as the main structural payoff of the anisotropic dissipation.
\end{remark}
\subsection{{Proof of Corollary \ref{cor:L}}}\label{sec:proofcor}
\begin{proof}[{Proof of Corollary~\ref{cor:L}}]
Under the horizontal Fourier decomposition, the invariant space
$\ker\mathbb P_0$ is the orthogonal direct sum of the sectors
$k\neq0$, and the linearized semigroup acts diagonally with respect
to this decomposition. Consequently,
\begin{align*}
\left\|
e^{t\Lan }
\big|_{\ker\mathbb P_0}
\right\|
=
\sup_{k\in(2\pi/L)\mathbb Z\setminus\{0\}}
\left\|e^{t\mathcal A_{k,\nu}}\right\|.
\end{align*}

If $L<2\pi$, then every nonzero admissible wavenumber satisfies
$|k|\geq k_{\min}>1$. Theorem~\ref{thm:A} gives
\begin{align*}
\left\|e^{t\mathcal A_{k,\nu}}\right\|
\leq
\left(1-\frac1{k^2}\right)^{-1/2}
e^{-\nu k^2t}
\leq
C_L e^{-\nu k_{\min}^2t}.
\end{align*}
Taking the supremum over $k\neq0$ proves the asserted upper bound.
The lower estimate in \eqref{twosided}, applied to
$k=\pm k_{\min}$, yields the matching exponential rate.

If $L=2\pi$, the critical sectors $k=\pm1$ satisfy the estimate in
Theorem~\ref{thm:k1}. For $|k|\geq2$, Theorem~\ref{thm:A} gives
\begin{align*}
\left\|e^{t\mathcal A_{k,\nu}}\right\|
\leq
\left(1-\frac14\right)^{-1/2}e^{-4\nu t}.
\end{align*}
Taking the supremum over all $k\neq0$ proves the stated bound.
The exact exponential rate follows from the critical-mode lower
bound
\begin{align*}
\left\|e^{t\mathcal A_{\pm1,\nu}}\right\|
\geq e^{-\nu t}.
\end{align*}

Finally, suppose that $L>2\pi$. The set $\mathcal K_L$ is finite
and nonempty. By Theorem~\ref{thm:B} and
Corollary~\ref{cor:unstable-eigenmode}, a long-wave mode $k$ is
exponentially unstable precisely when
\begin{align*}
a\Lambda(k)>\nu k^2.
\end{align*}
All sectors with $|k|\geq1$ have nonpositive asymptotic growth
exponents. Hence the full restricted semigroup is exponentially
unstable exactly when the preceding inequality holds for some
$k\in\mathcal K_L$, equivalently when
$a>a_{\mathrm c}(L,\nu)$. In that case, its growth exponent is the
maximum of the finitely many unstable long-wave exponents, as claimed.
\end{proof}

\section{Open problems: the nonlinear theory}

We close by formulating the nonlinear questions that the linear theory above suggests; they will be the object of a subsequent work.

\subsection{Nonlinear stability modulo shears for $L<2\pi$}
Let $L<2\pi$, so that every nonzero mode enjoys the sharp decay \eqref{twosided}. Theorem \ref{thm:k0} indicates that the correct conjecture is: for initial data $U_0$ with $\|U_0-U^{(0)}\|_{H^s}\le\eps_0$ small (in a suitable anisotropic space, uniformly in $a$), the solution of \eqref{ans} exists globally and converges, as $t\to\infty$, to a shear flow $(f_\infty(x_2),0)$ with $\|f_\infty-a\sin x_2\|$ small; moreover the non-shear part decays at the linear rate $e^{-\nu k_{\min}^2t}$. The expected difficulties are (a) the absence of dissipation on the shear component, which must be controlled by the structure of the nonlinearity (the shear component is driven only quadratically by decaying modes, so its total drift is finite for small data); and (b) the closure of the quadratic estimates given that vertical derivatives are never regularized --- here the divergence-free structure, which converts vertical structure of the velocity into horizontal derivatives via $\p_2u_2=-\p_1u_1$, should play the role it plays in the partially dissipative literature. The two-sided linear bound \eqref{twosided} suggests that, unlike in the enhanced-dissipation setting, the smallness $\eps_0$ can be taken uniform in $a$ for $L<2\pi$: at the linear level, $a$ simply does not appear. Whether the nonlinear interaction of the neutral shear modes with the decaying modes destroys this uniformity for large $a$ (through the $a$-dependent transient of the marginal case when $L=2\pi$, or through the quadratic feedback onto the kernel) is, in our view, the most interesting open question in the stable regime.

\subsection{Nonlinear dynamics near the threshold for $L>2\pi$}
Fix an admissible wavenumber $0<|k|<1$, and set
\begin{align*}
a_{\mathrm c}(k,\nu)
:=
\frac{\nu k^2}{\Lambda(k)}.
\end{align*}
Let $z_k\in\sigma_{\mathrm p}(\mathcal{M}_k)$ satisfy
$
\operatorname{Re}z_k=\Lambda(k).
$
The corresponding eigenvalue of the viscous operator is
\begin{align*}
az_k-\nu k^2,
\end{align*}
whose real part reaches zero at $a=a_{\mathrm c}(k,\nu)$. Moreover,
\begin{align*}
\frac{d}{da}
\operatorname{Re}\bigl(az_k-\nu k^2\bigr)
=
\Lambda(k)>0,
\end{align*}
so the crossing is transversal at the level of this eigenvalue.

If $z_k$ is simple and the appropriate nonlinear nondegeneracy
conditions hold, one may expect a local bifurcation of secondary
coherent structures from the Kolmogorov flow, in analogy with the
classical viscous theory of Yudovich {\cite{Yudovich}; see also \cite{OkamotoShoji,MatsudaMiyatake} for bifurcation analyses of Kolmogorov flows}. The nature of the bifurcating
solutions may depend on whether the critical eigenvalue is real or
nonreal. Establishing such a bifurcation and describing the nonlinear
dynamics for $a$ near $a_{\mathrm c}(k,\nu)$ remain open problems.

\subsection{Transverse dissipation}
Finally, the companion model with $b=(0,1)$ {---} vertical dissipation with a horizontal shear --- is expected to behave in the opposite way: the shear transfer feeds the dissipated direction, and enhanced dissipation with an $a$-dependent time scale should reappear. {This expectation is consistent with recent work on shear flows with vertical dissipation, in which the transfer of nonzero horizontal modes to large vertical frequencies does produce enhanced dissipation; see \cite{DengWuZhang} for the Boussinesq system and \cite{LiangWuZhai} for the MHD system near Couette flow.} A sharp comparison of the two anisotropic models, for the same background \eqref{Kolmo}, would isolate the role of the relative orientation of shear and dissipation in the stability of parallel flows.

\vskip .2in
\subsection*{Acknowledgments}

W. Yang was partially supported by the National Natural Science Foundation of China (No. 12561036), the Natural Science Foundation of Ningxia (No. 2023AAC02044), Ningxia Youth Top-notch Talents Cultivation Program and Ningxia Foreign Talents Introduction Program(2026). J.~Wu was partially supported by the
National Science Foundation of the United States (Grants DMS~2104682 and
DMS~2309748). X.~Zhai was partially supported by the Guangdong Provincial
Natural Science Foundation (Grants 2024A1515030115).

 \vskip .1in
\subsection*{Data Availability Statement} Data sharing is not applicable to this article as no
data sets were generated or analysed during the current study.

\vskip .1in

\subsection*{Conflict of Interest} The authors declare that they have no conflict of interest. The
authors also declare that this manuscript has not been previously published, and
will not be submitted elsewhere before your decision.

\vskip .1in
\subsection*{Declaration of generative AI and AI-assisted technologies in the manuscript preparation process}
 During the preparation of this work, the authors used Gemini 3.1-Pro to improve the English language. After using this
tool, the authors reviewed and edited the content as needed and take full responsibility for the final
version of the manuscript.

\end{document}